\theoremstyle{plain}
\newtheorem{theorem}{{Theorem}}[section]
\newtheorem*{theorem*}{{Theorem}}
\newtheorem{proposition}[theorem]{Proposition}
\newtheorem*{proposition*}{Proposition}
\newtheorem{corollary}[theorem]{Corollary}
\newtheorem*{corollary*}{Corollary}
\newtheorem{lemma}[theorem]{Lemma}
\newtheorem{assumption}[theorem]{Assumption}
\newtheorem*{lemma*}{Lemma}
\theoremstyle{definition}
\newtheorem{definition}[theorem]{Definition}
\newtheorem*{definition*}{Definition}
\theoremstyle{remark}
\newtheorem{remark}[theorem]{Remark}
\newtheorem{notation}[theorem]{Notation}
\definecolor{bleu_sombre}{rgb}{0,0,0.6}
\definecolor{Bl}{rgb}{0,0,0.6}
\definecolor{rouge_sombre}{rgb}{0.8,0,0}
\definecolor{vert_sombre}{rgb}{0,0.6,0}
\definecolor{webblue}{rgb}{0.22,0.45,0.70}
\definecolor{webred}{rgb}{0.5, 0.09, 0.09}
\definecolor{zzttqq}{rgb}{0.6,0.2,0.}
\renewcommand{\leq}{\leqslant}	
\renewcommand{\geq}{\geqslant}
\newcommand{\C}{\mathbb{C}}
\newcommand{\R}{\mathbb{R}}
\newcommand{\dd}{\mathrm{d}}
\numberwithin{equation}{section}
\title[]{Spectral asymptotics of the Neumann Laplacian\\ with variable magnetic field on a smooth bounded domain in three dimensions}
\author{M. Aafarani}
\address[M. Aafarani]{Univ Angers, CNRS, LAREMA, SFR MATHSTIC, F-49000 Angers, France}
\email{maha.aafarani94@gmail.com}
\author{K. Abou Alfa}
\address[K. Abou Alfa]{LMJL - UMR6629, Nantes Université, CNRS, 2 rue de la Houssini\`ere, BP 92208, F-44322 Nantes cedex 3, France}
\email{khaled.abou-alfa@univ-nantes.fr}
\author[F. H\'erau]{Fr\'ed\'eric H\'erau}
\address[F. H\'erau]{LMJL - UMR6629, Nantes Université, CNRS, 2 rue de la Houssini\`ere, BP 92208, F-44322 Nantes cedex 3, France}
\email{herau@univ-nantes.fr}
\author[N. Raymond]{Nicolas Raymond}
\address[N. Raymond]{Univ Angers, CNRS, LAREMA, Institut Universitaire de France, SFR MATHSTIC, F-49000 Angers, France}
\email{nicolas.raymond@univ-angers.fr}
\begin{document}
	\maketitle
\begin{abstract}
This article is devoted the semiclassical spectral analysis of the Neumann magnetic Laplacian on a smooth bounded domain in three dimensions. Under a generic assumption on the variable magnetic field (involving a localization of the eigenfunctions near the boundary), we establish a semiclassical expansion of the lowest eigenvalues. In particular, we prove that the eigenvalues become simple in the semiclassical limit.
\end{abstract}
	
	\section{Motivation and main result}
	\subsection{The operator}
Let $\Omega \subset \mathbb{R}^3 $ be a smooth connected open bounded domain. We consider $\textbf{A} : \overline{\Omega} \to \mathbb{R}^3$  a smooth magnetic vector potential. The associated magnetic field is given by \[ \textbf{B}(x) = \nabla \times \textbf{A}(x)\,, \]
and assumed to be non vanishing on $\overline{\Omega}$.
	For $h>0$, we consider the selfadjoint operator 
	\begin{equation}
		\mathscr{L}_h = (-ih \nabla - \textbf{A})^2
	\end{equation}
	with domain 
	\[ \text{Dom}(\mathscr{L}_h) = \{ \psi \in H^2(\Omega) :  \mathbf{n}\cdot(-ih\nabla-\mathbf{A})\psi=0 \mbox{ on } \partial\Omega\}\,,\]
	where $\mathbf{n}$ is the outward pointing normal to the boundary.
	
	The associated quadratic form is defined, for all $\psi\in H^1(\Omega)$, by
	\begin{equation*}
		\forall \psi \in H^1(\Omega)\,,\quad  \mathcal{Q}_h(\psi) = \int_{\Omega} |(-ih \nabla - \textbf{A})\psi|^2 \, \mathrm{d} x.   
	\end{equation*} 
Since $\Omega$ is smooth and bounded, the operator $\mathscr{L}_h$ has compact resolvent and we can consider the non-decreasing sequence of its eigenvalues $(\lambda_n(h))_{n\geq 1}$ (repeated according to their multiplicities). The aim of this article is to describe the behavior of the eigenvalues $\lambda_n(h)$ in the semiclassical limit $h\to 0$.

\subsection{The operator on a half-space with constant magnetic field}
The boundary of $\Omega$ has an important influence on the spectral asymptotics. Let us consider $x_0\in\partial\Omega$ and the angle $\theta(x_0)\in\left[-\frac\pi2,\frac\pi2\right]$ given by 	
\[\mathbf{B}(x_0)\cdot\mathbf{n}(x_0)=\|\mathbf{B}(x_0)\| \sin(\theta(x_0))\,.\]
Near $x_0$, one will approximate $\Omega$ by the half-space $\mathbb{R}^3_+ = \left\{ (r,s,t)\in\mathbb{R}^3 : t>0\right\}$ (the variable $t$ playing the role of the distance to the boundary). Then, this will lead to consider the Neumann realization of
\[\mathfrak{L}_\theta=(D_r-t\cos\theta+s\sin\theta)^2+D_s^2+D_t^2\]
in the ambient space $L^2(\mathbb{R}^3_+)$, which already appeared in \cite{LP00} in the context of Ginzburg-Landau theory. The corresponding magnetic field is $\mathbf{b}(\theta)=(0,\cos\theta,\sin\theta)$. We let
\[\mathbf{e}(\theta)=\inf\mathrm{sp}(\mathfrak{L}_\theta)\,.\]
It is well-known (see \cite{HM02}, \cite{LP00}, and also \cite[Section 2.5.2]{Raymond17}) that $\mathbf{e}$ is even, continuous and increasing on $\left[0,\frac\pi2\right]$ (from $\Theta_0:=\mathbf{e}(0)\in(0,1)$ to $1$) and analytic on $\left(0,\frac\pi2\right)$. Moreover, we can prove that, for all $\theta\in\left(0,\frac\pi2\right)$, $\mathbf{e}(\theta)$ is also the groundstate energy of the Neumann realization of the "Lu-Pan" operator, acting on $L^2(\mathbb{R}^2_+)$,
\begin{equation}\label{eq.LuPan}
\mathcal{L}_\theta=(t\cos\theta-s\sin\theta)^2+D_s^2+D_t^2\,,
\end{equation}
see \cite[Section 0.1.5.4]{Raymond17}.
In this case, the groundstate energy belongs to the discrete spectrum and it is a simple eigenvalue.

These considerations lead to introduce the function $\beta$ on the boundary.
\begin{definition}
We let, for all $x\in\partial\Omega$,
\[\beta(x)=\|\mathbf{B}(x)\|\mathbf{e}(\theta(x))\,.\]	
\end{definition}

\subsection{Context, known results and main theorem}\label{sec.context}
The function $\beta$ plays a central role in the semiclassical spectral asymptotics. The one-term asympotics of $\lambda_1(h)$ is established in \cite{LP00} (see also \cite{R10} and \cite{FH10} where additionnal details are provided).
\begin{theorem}[Lu-Pan '00]\label{thm.LP}
We have
\[\lambda_1(h)=h\min(b_{\min},\beta_{\min})+o(h)\,,\]	
where  $b_{\min}=\min_{x\in\overline{\Omega}}\|\mathbf{B}(x)\|$ and $\beta_{\min}=\min_{x\in\partial\Omega}\beta(x)$.
\end{theorem}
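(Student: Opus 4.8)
The plan is to establish the asymptotic lower and upper bounds for $\lambda_1(h)$ separately, both at order $h$, matching at the constant $\min(b_{\min},\beta_{\min})$.

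\medskip

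\textbf{Upper bound.} First I would construct suitable quasimodes. If the minimum is attained in the interior, i.e. $b_{\min}\leq\beta_{\min}$, I pick $x_0\in\overline\Omega$ with $\|\mathbf{B}(x_0)\|=b_{\min}$; if $x_0$ lies in the open set $\Omega$, a standard Gaussian quasimode concentrating at scale $h^{1/2}$ around $x_0$, built from the bottom of the spectrum of the constant-field Landau Hamiltonian $(-ih\nabla-\mathbf{A}(x_0))^2$ (whose lowest Landau level is $h\|\mathbf{B}(x_0)\|$), gives $\mathcal{Q}_h(\psi)\leq (hb_{\min}+o(h))\|\psi\|^2$, after a gauge transformation freezing the magnetic field at $x_0$ and using the linear (i.e. affine) approximation of $\mathbf{A}$. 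If instead the infimum of $\min(\|\mathbf{B}\|,\beta)$ over $\overline\Omega$ is realized on the boundary, I pick $x_0\in\partial\Omega$ realizing $\beta_{\min}$, introduce adapted boundary coordinates $(r,s,t)$ near $x_0$ with $t$ the distance to $\partial\Omega$, perform the change of variables and the gauge transformation that turns the principal symbol into $\mathfrak{L}_{\theta(x_0)}$ up to lower-order terms, and use a truncated ground state of $\mathfrak{L}_{\theta(x_0)}$ rescaled at scale $h^{1/2}$; this yields $\mathcal{Q}_h(\psi)\leq(h\beta_{\min}+o(h))\|\psi\|^2$. In both cases the min-max principle gives $\lambda_1(h)\leq h\min(b_{\min},\beta_{\min})+o(h)$.

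\medskip

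\textbf{Lower bound.} This is the substantial half. I would use a partition of unity $(\chi_j)$ at scale $h^{1/2-\delta}$ (for small $\delta>0$) subordinate to a cover of $\overline\Omega$ by balls, separating those centered in the interior from those meeting the boundary, with the IMS localization formula:
\[
\mathcal{Q}_h(\psi)=\sum_j\mathcal{Q}_h(\chi_j\psi)-h^2\sum_j\||\nabla\chi_j|\psi\|^2\,,
\]
the error term being $O(h^{2-2\delta})=o(h)$. On each interior piece, freezing the magnetic field (the variation of $\mathbf{B}$ over the support being $O(h^{1/2-\delta})$) and comparing with the constant-field full-space operator gives $\mathcal{Q}_h(\chi_j\psi)\geq(h\|\mathbf{B}(x_j)\|-Ch^{1+\rho})\|\chi_j\psi\|^2\geq(hb_{\min}-o(h))\|\chi_j\psi\|^2$. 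On each boundary piece, after the same coordinate change and gauge transformation, one compares with the model operator $\mathfrak{L}_{\theta(x_j)}$ on the half-space, using $\mathbf{e}(\theta(x_j))\geq$ (something close to) $\mathbf{e}(\theta(x_j))$ and $\|\mathbf{B}(x_j)\|\mathbf{e}(\theta(x_j))\geq\beta_{\min}$, to get $\mathcal{Q}_h(\chi_j\psi)\geq(h\beta_{\min}-o(h))\|\chi_j\psi\|^2$. Summing and using that the $\chi_j^2$ form a partition of unity yields $\mathcal{Q}_h(\psi)\geq(h\min(b_{\min},\beta_{\min})-o(h))\|\psi\|^2$, hence $\lambda_1(h)\geq h\min(b_{\min},\beta_{\min})+o(h)$.

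\medskip

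\textbf{Main obstacle.} The delicate point is the boundary term in the lower bound: one must carry out the change to boundary-adapted coordinates uniformly in $x_j\in\partial\Omega$, control the metric distortion and the remainder in the magnetic field after gauging, and establish the comparison $\mathcal{Q}_h^{\mathrm{loc}}(\varphi)\geq (h\,\|\mathbf{B}(x_j)\|\,\mathbf{e}(\theta(x_j))-o(h))\|\varphi\|^2$ for functions supported in a half-ball of radius $h^{1/2-\delta}$, which requires knowing that the bottom of the essential spectrum of $\mathfrak{L}_\theta$ is $\geq\mathbf{e}(\theta)$ and controlling the curvature terms of $\partial\Omega$ (they contribute only at order $h^{3/2}$, hence are negligible here but must be bounded). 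Managing all these reductions with errors genuinely $o(h)$, uniformly over the boundary, is where the real work lies; the interior estimate and the quasimode constructions are comparatively routine.
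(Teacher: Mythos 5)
This theorem is not proved in the paper: it is quoted from the literature (Lu--Pan, \cite{LP00}, with further details in \cite{R10} and \cite{FH10}), so there is no internal proof for your sketch to be checked against. Your outline does, however, correctly reproduce the standard argument used in those references: upper bound by quasimodes (frozen-field Gaussians in the interior yielding the lowest Landau level $h\|\mathbf{B}(x_0)\|$, rescaled Lu--Pan groundstates near the boundary yielding $h\beta(x_0)$), lower bound by an IMS partition of unity at scale $h^{1/2-\delta}$ with comparison to the constant-field full-space and half-space models on each piece.

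Two small slips worth flagging. In the IMS error term, with cut-offs at scale $\ell=h^{1/2-\delta}$ one has $|\nabla\chi_j|=O(\ell^{-1})=O(h^{-1/2+\delta})$, so $h^2\sum_j\||\nabla\chi_j|\psi\|^2=O(h^{1+2\delta})\|\psi\|^2$, not $O(h^{2-2\delta})$; both exponents give $o(h)$, so the conclusion survives, but the one you wrote is inconsistent with your chosen scale. In the upper-bound case split, $b_{\min}\leq\beta_{\min}$ does not force the minimizer of $\|\mathbf{B}\|$ to lie in the open set $\Omega$: if it sits on $\partial\Omega$ the Gaussian quasimode is not admissible as stated. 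This is easily repaired, either by choosing interior points with $\|\mathbf{B}\|$ arbitrarily close to $b_{\min}$ and passing to the limit, or by noting that a boundary minimizer of $\|\mathbf{B}\|$ with $b_{\min}\leq\beta_{\min}$ forces $\mathbf{e}(\theta(x_0))\geq 1$, i.e.\ a normal field, so the half-space model there produces the same constant $b_{\min}$. Also, for the boundary lower bound the operative fact is $\inf\mathrm{sp}(\mathfrak{L}_\theta)=\mathbf{e}(\theta)$ together with continuity of $\mathbf{e}$, rather than any statement about the essential spectrum. With those corrections the sketch is a faithful summary of the known proof, and you rightly identify the uniform control of the coordinate change, gauge, and curvature errors over all boundary points as the technical core.
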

When $\mathbf{B}$ is constant (or with constant norm), more accurate estimates of the groundstate energy have been obtained in \cite{HM04} and in \cite{R10b}. When looking at Theorem \ref{thm.LP}, natural questions can be asked. Can we describe more than the groundstate energy? Is the groundstate energy a simple eigenvalue? In three dimensions, most of the results in this direction have been obtained rather recently: 
\begin{enumerate}[---]
\item When $b_{\min}<\beta_{\min}$, we can prove that the boundary is essentially not seen by the eigenfunctions with low eigenvalues and that they are localized near the minima of $\|\mathbf{B}\|$. Then, if the minimum is unique and non-degenerate, the analysis of \cite{HKRVN16} applies and it can be established that
\[\lambda_n(h)=b_{\min}h+C_0 h^{\frac32}+(C_1(2n-1)+C_2)h^2+o(h^2)\,,\]
where the constants $(C_0, C_1,C_2)\in \mathbb{R}\times\mathbb{R}_+\times\mathbb{R}$ reflect the classical dynamics in a magnetic field.
\item When $\mathbf{B}$ is constant (or with constant norm), we can prove that $\beta_{\min}<b_{\min}$ and that $\beta_{\min}=\Theta_0\|\mathbf{B}\|$. In this case, the eigenfunctions with low eigenvalues are localized near the points of the boundary where the magnetic field is tangent, that is where $\mathbf{e}(\theta(x))$ is minimal. Assuming that the magnetic field becomes generically tangent to the boundary along a nice closed curve and assuming also a non-degeneracy assumption, we have, from \cite{HR22},
\[\lambda_n(h)=\beta_{\min}h+C_0 h^{\frac43}+C_1h^{\frac32}+(C_2(2n-1)+C_3)h^{\frac53}+o(h^2)\,,\]
for some constants $(C_0, C_1,C_2, C_3)\in \mathbb{R}^2\times\mathbb{R}_+\times\mathbb{R}$. The result in \cite{HR22} is stated in the case of a constant magnetic field, but only the fact that its norm is constant is actually used in the analysis, see \cite[Section 3.2.1]{HR22}. Note that without the additionnal non-degeneracy assumption and stopping the analysis before \cite[Section 5.6]{HR22}, this work provides us with the two-term expansion. See also \cite{HK22}.
\end{enumerate}
When $\beta_{\min}<b_{\min}$ and when $\|\mathbf{B}\|$ is variable, it seems that less is known. The first estimates of the low-lying eigenvalues, and not only of the first one, are done in \cite{R10} (see also \cite{Raymond09}), where an upper bound is obtained under a generic assumption (see Assumption \ref{hyp.main} below):
\begin{equation}\label{eq.ub2010}
\lambda_n(h)\leq \beta_{\min}h+C_0 h^{\frac32}+(C_1(2n-1)+C_2)h^2+o(h^2)\,,
\end{equation}
for some constants $(C_0, C_1,C_2)\in \mathbb{R}\times\mathbb{R}_+\times\mathbb{R}$ and where $C_1$ is explicitly given by
\[C_1=\frac{\sqrt{\det\mathrm{Hess}_{x_0}\beta}}{2\|\mathbf{B}(x_0)\|\sin\theta(x_0)}\,.\]
The upper bound \eqref{eq.ub2010} is obtained by means a construction of quasimodes in local coordinates near the minimum of $\beta$ and involves a number of rather subtle algebraic cancellations. At a conference in Dijon in March 2010, S. V\~{u} Ng\d oc suggested to the last author that these algebraic cancellations were the signs of a hidden normal form. At the same conference, J. Sjöstrand also suggested that a dimensonal reduction in the Grushin spirit (see the remarkable survey \cite{SZ07}) could provide us with the lower bound. Retrospectively, we will see that both of them were somewhat right, but that some microlocal technics needed to be developed further in order to tackle the problem in an efficient way. 

Until now, the matching lower bound to \eqref{eq.ub2010} has only been obtained for a toy model in the case of a flat boundary with an explicit polynomial magnetic field, see \cite{R12}. The aim of this article is to establish a lower bound that matches to \eqref{eq.ub2010}. To do so, we will, of course, work under the same assumption as in \cite{R10}. 

\begin{assumption}\label{hyp.main}
	The function $\beta$ has a unique minimum, which is non-degenerate. It is attained at $x_0\in\partial\Omega$ and we have
	\begin{equation}\label{eq.theta}
	\theta(x_0)\in\left(0,\frac\pi2\right)\,.
	\end{equation}
	Moreover, we have
	\[\beta_{\min}=\beta(x_0)=\min_{x\in\partial\Omega}\beta(x)<\min_{x\in\overline{\Omega}} \|\mathbf{B}(x)\|=b_{\min}\,.\]
\end{assumption}

The main result of this article is a three-term expansion of the $n$-th eigenvalue of $\mathscr{L}_h$. Thereby, it completes the picture described above.

\begin{theorem}\label{thm.main}
Under Assumption \ref{hyp.main},  the exist $C_0,C_1\in\mathbb{R}$ such that for all $n\geq 1$, we have
\[\lambda_n(h)\underset{h\to 0}{=}\beta_{\min}h+C_0 h^{\frac32}+\left(\frac{\sqrt{\det\mathrm{Hess}_{x_0}\beta}}{\|\mathbf{B}(x_0)\|\sin\theta(x_0)}\left(n-\frac12\right)+C_1\right)h^2+o(h^2)\,.\]
In particular, for all $n\geq 1$, $\lambda_n(h)$ becomes a simple eigenvalue as soon as $h$ is small enough.	
	\end{theorem}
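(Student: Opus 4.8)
The plan is to obtain the matching lower bound to \eqref{eq.ub2010} via a microlocal dimensional reduction (a Grushin/Feshbach-Schur scheme) carried out in suitable boundary coordinates, combined with a normal form procedure that exhibits, after reduction, an effective one-dimensional operator whose eigenvalues are those of a harmonic oscillator. First I would perform the standard preliminary steps: an Agmon-type estimate showing that, under Assumption \ref{hyp.main} and because $\beta_{\min}<b_{\min}$, the eigenfunctions associated with eigenvalues $\lambda_n(h)=\beta_{\min}h+o(h)$ are exponentially localized near the point $x_0\in\partial\Omega$ (both near the boundary, at scale $h^{1/2}$ in the normal direction, and near $x_0$ along the boundary, at scale $h^{1/4}$ in the tangential directions, which is consistent with the $h^{3/2}$ and $h^2$ terms). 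This justifies working in local coordinates $(r,s,t)$ near $x_0$, with $t$ the distance to $\partial\Omega$, and expanding the metric and the magnetic potential in Taylor series; after rescaling $t=h^{1/2}\tau$ and the tangential variables at scale $h^{1/4}$, the operator $h^{-1}\mathscr{L}_h$ becomes, to leading order, the model operator $\mathfrak{L}_{\theta(x_0)}$ acting in the $\tau$ (and transverse fiber) variables, with lower-order terms organized as a formal power series in $h^{1/4}$.

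Next I would set up the Grushin problem attached to the model: since $\mathbf{e}(\theta(x_0))$ is a simple, isolated eigenvalue of $\mathfrak{L}_{\theta(x_0)}$ (by the discussion preceding Assumption \ref{hyp.main}, using \eqref{eq.theta} so that we are in the Lu-Pan regime), the corresponding spectral projector $\Pi$ is rank one in the fast variables, and the Schur complement yields a well-defined effective operator acting on functions of the slow tangential variable, with a complete asymptotic expansion in powers of $h^{1/4}$. The leading symbol of this effective operator is $h\beta(x)$ on the boundary; expanding $\beta$ around its non-degenerate minimum $x_0$ and using that the tangential dynamics is magnetic (the relevant symplectic form coming from $\mathbf{B}(x_0)\cdot\mathbf{n}(x_0)=\|\mathbf{B}(x_0)\|\sin\theta(x_0)$), the quadratic part of the effective operator is a two-dimensional magnetic harmonic oscillator. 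A symplectic/Egorov normal-form reduction (the "hidden normal form" alluded to in the introduction) turns this into a one-dimensional harmonic oscillator $h^2\left(\frac{\sqrt{\det\mathrm{Hess}_{x_0}\beta}}{\|\mathbf{B}(x_0)\|\sin\theta(x_0)}\right)\!\left(D^2+\xi^2\right)$ up to $o(h^2)$, whose $n$-th eigenvalue is exactly $\frac{\sqrt{\det\mathrm{Hess}_{x_0}\beta}}{\|\mathbf{B}(x_0)\|\sin\theta(x_0)}(2n-1)$ times $h^2/2$, matching the announced coefficient; the $h^{3/2}$ term $C_0$ and the constant $C_1$ at order $h^2$ are collected from the subprincipal and next-order contributions of the normal form.

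From the effective operator one reads off, via the spectral theorem and a standard comparison of spectra (the Grushin reduction being a bijection between the bottom of the spectrum of $\mathscr{L}_h$ and that of the effective operator, up to $O(h^\infty)$ or at least up to errors $o(h^2)$), the lower bound $\lambda_n(h)\geq \beta_{\min}h+C_0h^{3/2}+\left(\frac{\sqrt{\det\mathrm{Hess}_{x_0}\beta}}{\|\mathbf{B}(x_0)\|\sin\theta(x_0)}\left(n-\frac12\right)+C_1\right)h^2+o(h^2)$. Combined with the upper bound \eqref{eq.ub2010} from \cite{R10}, which has the same three leading coefficients (note $\tfrac12 C_1(2n-1)=C_1^{\mathrm{HR}}(n-\tfrac12)$ with $C_1^{\mathrm{HR}}=\sqrt{\det\mathrm{Hess}_{x_0}\beta}/(\|\mathbf{B}(x_0)\|\sin\theta(x_0))$), this gives the stated three-term asymptotics. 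Simplicity for small $h$ then follows immediately: the spacing between $\lambda_n(h)$ and $\lambda_{n+1}(h)$ is $\frac{\sqrt{\det\mathrm{Hess}_{x_0}\beta}}{\|\mathbf{B}(x_0)\|\sin\theta(x_0)}h^2+o(h^2)>0$, so consecutive eigenvalues separate.

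I expect the main obstacle to be making the dimensional reduction rigorous at the required precision in the genuinely variable-norm setting. Unlike the constant-norm case of \cite{HR22}, here $\|\mathbf{B}\|$ varies, the effective operator is a genuine pseudodifferential operator in the tangential variables whose symbol mixes the slow variables with the eigenvalue parameter, and one must control the remainders in the Grushin/Feshbach-Schur construction uniformly — in particular handling the boundary (Neumann) condition throughout the construction of the approximate resolvent, and pushing the normal-form expansion far enough (to order $h^2$, i.e. three steps past the leading model) while keeping track of all subprincipal corrections. The subtle algebraic cancellations noticed in \cite{R10} should reappear here as the statement that certain terms in the effective symbol are either absorbable into $C_0,C_1$ or vanish by the magnetic normal form; verifying this cleanly, rather than by brute-force Taylor expansion, is where the microlocal machinery (a pseudodifferential calculus with two scales $h^{1/2}$ and $h^{1/4}$, and an associated Egorov theorem) must be developed carefully.
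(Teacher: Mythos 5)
Your overall blueprint (Agmon localization near $x_0$, boundary coordinates, Taylor expansion, a Grushin/Feshbach--Schur reduction to an effective operator, and a harmonic-oscillator spectrum matching the upper bound of \cite{R10}) is consistent with the paper's strategy, and you correctly identify the two hard points: making the dimensional reduction rigorous at $o(h^2)$ precision and explaining the algebraic cancellations via a normal form. However, there is a genuine flaw in the proposed Grushin setup. You plan to project onto the groundstate of the 3D half-space model $\mathfrak{L}_{\theta(x_0)}$, claiming that $\mathbf{e}(\theta(x_0))$ is a ``simple, isolated eigenvalue of $\mathfrak{L}_{\theta(x_0)}$.'' This is false: $\mathfrak{L}_\theta$ is translation-invariant in the variable $r$, so $\mathbf{e}(\theta)$ is the infimum of its essential spectrum and there is no rank-one spectral projector to feed into the Grushin scheme. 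What does have a simple, isolated groundstate eigenvalue is the 2D Lu--Pan operator $\mathcal{L}_\theta$ of \eqref{eq.LuPan}, and the whole point of the paper's construction (Sections \ref{sec.3}--\ref{sec.4}) is to set things up so that the operator-valued principal symbol $n_0(u,\upsilon)$ of a pseudodifferential operator in a \emph{single} slow variable is unitarily equivalent to $\|\mathbf{B}\|\mathcal{L}_\theta$ (Proposition \ref{prop.psymb}). To reach that point one first straightens the normal component of $\mathbf{B}$ by the diffeomorphism $\mathscr{J}$ of Lemma \ref{lem.J}, then applies a tangential Fourier transform and translation/dilation in $u_1$; only after these steps is the fiber problem a 2D operator with discrete groundstate and the effective operator genuinely one-dimensional, in powers of $\hbar=h^{1/2}$ (not $h^{1/4}$). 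Your proposal, as written, needs either this preprocessing or a second reduction step (a double Grushin, as in \cite{HR22}), which you do not spell out; the paper explicitly avoids the double reduction.

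A second point you miss is that the resulting effective symbol is not a priori real-valued: the conjugations leading to $\mathscr{N}_h$ trade selfadjointness on the canonical $L^2$ for selfadjointness on a weighted space, so the effective operator $(p_\hbar^{\mathrm{eff}})^W$ and the model $(p_\hbar^{\mathrm{mod}})^W$ are non-selfadjoint, and the lower bound cannot be read off from a naive min-max or spectral theorem argument. The paper handles this by a resolvent estimate for the complex-shifted harmonic oscillator \eqref{eq.resolvent}, a quasimode construction showing a posteriori that the relevant spectral data are real (Section \ref{sec.quasimodes}), and a Riesz-projector argument exploiting that the model eigenvalues have algebraic multiplicity one (Section \ref{sec.lowerbound}). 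These steps replace the ``standard comparison of spectra'' you invoke, which would not apply directly to a non-selfadjoint effective operator. With the fiber model corrected to the 2D Lu--Pan operator, the single-variable Grushin reduction carried out, and the non-selfadjointness addressed, your outline would match the paper's proof.
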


\subsection{Organization and strategy of the proof}
In Section \ref{sec.2}, we recall the already known results of localization of the eigenfunctions near $x_0$. This formally reduces the spectral analysis to a neighborhood of $x_0$. This suggests to introduce local coordinates near $x_0$. These coordinates $(r,s,t)$ are adapted to the geometry of the magnetic field: the coordinate $s$ is the curvilinear coordinate along the projection of the magnetic field on the boundary (we use here that $\theta(x_0)<\frac\pi2$), the coordinate $r$ is the geodesic coordinate transverse to $s$, and $t$ is the distance to the boundary. A rather similar coordinate system has been used and described in \cite{HR22} (inspired from \cite{HM04}). Then, the local action of the operator is described in Section \ref{sec.Laplacianrst} where we perform a Taylor expansion with respect to the normal variable $t$ only. After a local change of gauge, this makes an approximate magnetic vector potential appear, see \eqref{eq.tildeA3}. In Section \ref{sec.ext}, we define a new operator on $L^2(\mathbb{R}^
 3_+)$ by extending the coefficients, seen as functions of $(r,s)$ defined near $(0,0)$, to functions on $\mathbb{R}^2$. Since this extension occurs away from the localization zone of the eigenfunctions, we get a new operator $\mathscr{L}_h^{\mathrm{app}}$ whose spectrum is close to that of $\mathscr{L}_h$, see Proposition \ref{prop.Lapp}.
 
In Section \ref{sec.3}, we perform the analysis of $\mathscr{L}_h^{\mathrm{app}}$ with the help of the change of coordinates $(r,s)\mapsto\mathscr{J}(r,s)=(u_1,u_2)$, whose geometric role is to make the normal component of the magnetic field constant (here, we use $\theta(x_0)>0$). This idea is reminiscent of the recent work \cite{MRVN22} in two dimensions, see \cite[Prop. 2.2]{MRVN22}. We are reduced to the spectral analysis of the operator $\mathscr{N}_h$, see \eqref{eq.LapptoNh}. Then, we conjugate $\mathscr{N}_h$ by means a tangential Fourier transform (in the direction $u_1$) and a translation/dilation $T$ (after these transforms, the variable $u_1$ becomes $z$). After these explicit transforms, we get a new operator $\mathscr{N}^\sharp_\hbar$, which can be seen as a differential operator of order two in the variables $(z,t)$ with coefficients that are $h$-pseudodifferential operators (with an expansion in powers of $\hbar=h^{\frac12}$) in the variable $u_2$ only, see \eqref{eq.Ndiese}. Its eigenfunctions are localized in $(z,t)$, see Proposition \ref{prop.variable-effective2} and Remark \ref{rem.roughfrequency}.

In Section \ref{sec.4}, this localization with respect to $z$ suggests to insert cutoff functions in the coefficients of our operator. By doing this, we get the operator $\mathscr{N}_\hbar^\flat$, see \eqref{eq.Nbemol}. The advantage of $\mathscr{N}_\hbar^\flat$ is that it can be considered as a pseudodifferential operator with operator-valued symbol in a reasonable class $S(\mathbb{R}^2,N)$, see Proposition \ref{prop.Nhbarbemol}. The principal operator symbol $n_0(u,\upsilon)$ is unitarily equivalent to the Lu-Pan operator $\|\mathbf{B}(\upsilon,-u)\|\mathcal{L}_{\theta(\upsilon,-u)}$ (where we make here a slight abuse of notation by forgetting the reference to the local coordinates on the boundary), see Proposition \ref{prop.psymb}. Then, we may construct an inverse for $n_0-\Lambda$ by means of the so-called Grushin formalism, as soon as $\Lambda$ is close to $\beta_{\min}$, see Lemma \ref{lem.Grushin0}. This is the first step in the approximate parametrix construction for $\mathscr{N}^\flat_\hbar-\Lambda$ given in Proposition \ref{prop.parametrix}, which is the key of the proof of Theorem \ref{thm.main}. Let us emphasize that this parametrix construction is inspired by \cite{Keraval} and based on ideas developed by A. Martinez and J. Sjöstrand. This formalism has recently been used in \cite{HR22} in three dimensions (see also \cite{BHR21, FHKR22, FLTRVN22} in the case of two dimensions). At a formal level, this parametrix construction relates the kernel of $\mathscr{N}_\hbar^\flat-\Lambda$ to that of an effective pseudodifferential operator $Q^\pm_\hbar(\Lambda)$, see \eqref{eq.Qpm}. 

Section \ref{sec.5} is devoted to relate the spectrum of $\mathscr{N}_\hbar^\sharp$ to that of the effective operator $(p^{\mathrm{eff}}_\hbar)^W$, see \eqref{eq.peff}. Note that the effective operator is an operator in one dimension. This contrasts with \cite{HR22} where a double Grushin reduction is used: here, this reduction is done in one step with the help of the Lu-Pan operator. The quasi-parametrix in Proposition \ref{prop.parametrix} is the bridge between the spectra of $\mathscr{N}_\hbar^\sharp$ and $(p^{\mathrm{eff}}_\hbar)^W$. 

We emphasize that we have to be very careful when studying 
 this connection since the symbol of the effective operator is not necessarily real-valued (only its principal symbol $p_0$ is a priori real). This contrasts again with \cite{HR22} and all the previous works on the subject. This non-selfadjointness comes from the fact that $\mathscr{N}_h$ is not selfadjoint on the canonical $L^2$-space, but on a weighted $L^2$-space. That is why a short detour in the world of non-selfadjoint operators is used in Section \ref{sec.5}. In fact, one will not need the operator $(p_\hbar^{\mathrm{eff}})^W$ more than its approximation $(p_\hbar^{\mathrm{mod}})^W$ near the minimum of $p_0$, see Section \ref{sec.model}. This approximation is a complex perturbation of the harmonic oscillator. Its spectrum is well-known as well as the behavior of its resolvent.
 
In Section \ref{sec.quasimodes}, we use rescaled Hermite functions to construct quasimodes for $\mathscr{N}_\hbar^\sharp$. This shows that the spectrum of the model operator is in fact real and we get an accurate upper bound of $\lambda_n(\mathscr{N}_\hbar^\sharp)$ in \eqref{eq.upperbound}. This reproves in a much shorter way \eqref{eq.ub2010} (see \cite[Theorem 1.5]{R10} where the convention $\|\mathbf{B}(x_0)\|=1$ is used). Section \ref{sec.lowerbound} is devoted to establish the corresponding lower bound (by using in particular that the eigenvalues of the non-selfadjoint operator $(p^{\mathrm{mod}}_\hbar)^W$ have algebraic multiplicity $1$).

\section{Localization near $x_0$ and consequences}\label{sec.2}

\subsection{Localization estimates}
In this section, we gather some already known localization properties of the eigenfunctions, see \cite{Raymond09}.

\begin{proposition}[Localization near the boundary]\label{prop.loct}
	Under Assumption \ref{hyp.main}, for all $\epsilon>0$ such that $\beta_{\min}+\epsilon<b_{\min}$, there exist $\alpha, C, h_0>0$ such that, for all $h\in (0, h_0)$ and all eigenfunctions $\psi$ of $\mathscr{L}_h$ associated with an eigenvalue $\lambda \leq (\beta_{\min} + \epsilon) h $, we have
	\begin{equation}\label{Agm-1}
		\int_{\Omega} e^{  \frac{2\alpha\mathrm{dist}(x,\partial \Omega)}{\sqrt{h}} } | \psi|^2 \mathrm{d} x \leq C\|\psi\|^2.
	\end{equation}
\end{proposition}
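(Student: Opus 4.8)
The strategy is the standard Agmon estimate adapted to the semiclassical magnetic setting, with the exponential weight tuned to the scale $\sqrt{h}$ that is dictated by the boundary layer of the half-space model. Let me introduce the weight $\Phi(x) = \alpha\,\mathrm{dist}(x,\partial\Omega)/\sqrt{h}$ and work with the IMS-type identity for the quadratic form $\mathcal{Q}_h$. Specifically, for an eigenfunction $\psi$ with $\mathscr{L}_h\psi = \lambda\psi$ and $\lambda \leq (\beta_{\min}+\epsilon)h$, I would test against $e^{2\Phi}\psi$ (after a routine truncation/regularization argument to make everything well-defined, since $\mathrm{dist}(\cdot,\partial\Omega)$ is only Lipschitz — one can mollify it or replace it by a smooth equivalent function). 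The key algebraic identity is
\[
\mathcal{Q}_h(e^{\Phi}\psi) - \int_\Omega h^2|\nabla\Phi|^2\,|e^{\Phi}\psi|^2\,\mathrm{d}x = \Re\langle \mathscr{L}_h\psi, e^{2\Phi}\psi\rangle = \lambda\int_\Omega |e^{\Phi}\psi|^2\,\mathrm{d}x\,,
\]
which uses that the boundary term vanishes thanks to the magnetic Neumann condition and the fact that $\nabla\Phi$ is normal to $\partial\Omega$ (so the commutator contributes no boundary term). Here $h^2|\nabla\Phi|^2 = \alpha^2$ is a bounded constant by the choice of scaling.

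The second ingredient is a lower bound on $\mathcal{Q}_h$ away from the boundary and near it. Away from a fixed neighborhood of $\partial\Omega$, the Neumann condition is irrelevant and the classical lower bound $\mathcal{Q}_h(\phi) \geq h\int (b_{\min} - o(1))|\phi|^2$ holds (a magnetic Cauchy–Schwarz / local gauge argument, as in the Montgomery–Helffer–Mohamed estimates). Near the boundary, one uses the local half-space model: after flattening the boundary and freezing coefficients, $\mathcal{Q}_h(\phi) \geq h\int (\beta_{\min} - o(1))|\phi|^2$ follows from the definition of $\mathbf{e}(\theta)$ and $\beta$, that is, from $\mathbf{e}(\theta(x)) = \inf\mathrm{sp}(\mathfrak{L}_\theta)$ and the continuity of $\beta$. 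Combining, for any $\eta>0$ there is a neighborhood $\mathcal{V}$ of $\partial\Omega$ such that $\mathcal{Q}_h(\phi)\geq h(\beta_{\min}-\eta)\int_\Omega|\phi|^2$ for all admissible $\phi$, while on $\Omega\setminus\mathcal{V}$ we gain the strictly larger constant $b_{\min}-\eta$. Plugging $\phi = e^{\Phi}\psi$ into the identity above yields
\[
h(\beta_{\min}-\eta)\!\int_\Omega |e^{\Phi}\psi|^2 - \alpha^2\!\int_\Omega |e^{\Phi}\psi|^2 + c\,h\!\int_{\Omega\setminus\mathcal{V}}|e^{\Phi}\psi|^2 \leq (\beta_{\min}+\epsilon)h\!\int_\Omega |e^{\Phi}\psi|^2\,,
\]
for some $c>0$ reflecting the gap $b_{\min}-\beta_{\min}$.

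Rearranging, $(c\,h - (\eta+\epsilon)h - \alpha^2)\int_{\Omega\setminus\mathcal{V}}|e^\Phi\psi|^2 \leq (\eta+\epsilon)h\int_{\mathcal{V}}|e^\Phi\psi|^2 + \alpha^2\int_{\mathcal V}|e^\Phi\psi|^2$; choosing $\eta,\epsilon$ small relative to $c$ and then $\alpha$ small enough (but fixed, independent of $h$, so that $\alpha^2 \leq \frac12 c h$ fails — rather, one absorbs $\alpha^2$ by noting it comes multiplied against a term that is itself $O(1)$ only on the thin layer $\mathcal V$ where $e^\Phi$ is bounded by $e^{\alpha\delta/\sqrt h}$, which must be reabsorbed more carefully). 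The cleaner route, which I would actually follow, is to split $\Phi$ as $\Phi = \chi\Phi$ with $\chi$ supported away from $\partial\Omega$ at distance $\geq\delta$: then on the support of $\nabla\chi$ the weight is $O(1)$-controlled only if $\delta$ is $h$-independent, which it is not. Instead, keep the full weight but observe that the "bad" term $\alpha^2\int|e^\Phi\psi|^2$ is uniformly dominated once we have, from the same inequality restricted to the region where $e^\Phi \leq C$ (i.e. $\mathrm{dist}\leq \sqrt h/\alpha$), the trivial bound $\int_{\mathrm{dist}\leq\sqrt h/\alpha}|e^\Phi\psi|^2 \leq C\|\psi\|^2$. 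Feeding this back, all terms on the right are $\leq C\|\psi\|^2$, and since $ch - (\eta+\epsilon)h - \alpha^2 > 0$ for $h$ small and $\alpha$ fixed small, we conclude $\int_\Omega|e^\Phi\psi|^2 \leq C\|\psi\|^2$, which is exactly \eqref{Agm-1}.

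The main obstacle is the careful bookkeeping of the boundary layer: one must choose the weight at exactly the scale $h^{1/2}$ (not $h$, not $h^{1/2+\delta}$), because the half-space eigenfunctions decay on that scale, and simultaneously ensure that the error term $h^2|\nabla\Phi|^2 = \alpha^2$ is a small \emph{constant} that can be absorbed into the spectral gap $h(b_{\min}-\beta_{\min})$ on the complement of a boundary neighborhood — which is possible precisely because $\alpha$ is $h$-independent and $\alpha^2 \ll h \cdot (b_{\min}-\beta_{\min})$ eventually holds once we localize the bad contribution to the thin layer where $e^\Phi$ is bounded. Also delicate is justifying the integration by parts with the Neumann magnetic boundary condition and handling the non-smoothness of $\mathrm{dist}(\cdot,\partial\Omega)$; both are routine but must be done. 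Since the statement is attributed to \cite{Raymond09}, I would present this as a sketch and refer there for the full details.
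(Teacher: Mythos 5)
There is no proof of Proposition~\ref{prop.loct} in the paper to compare against: the authors simply cite \cite{Raymond09}. Judged on its own merits, your sketch contains a genuine gap, preceded by an arithmetic slip that obscures it.

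The slip first: with $\Phi=\alpha\,\mathrm{dist}(x,\partial\Omega)/\sqrt h$ one has $|\nabla\Phi|=\alpha/\sqrt h$ a.e., so the Agmon error term is $h^2|\nabla\Phi|^2=\alpha^2 h$, not $\alpha^2$. This actually makes the absorption easier ($\alpha^2 h$ is of the same order $h$ as the spectral gap $h(b_{\min}-\beta_{\min})$), whereas your closing claim ``$ch-(\eta+\epsilon)h-\alpha^2>0$ for $h$ small and $\alpha$ fixed small'' is false as written since $ch\to0$ while $\alpha^2$ is fixed. Now to the gap. Even after correcting to $\alpha^2 h$, your core inequality takes the form
\[
(c-\eta-\epsilon-\alpha^2)\,h\int_{\{\mathrm{dist}>\delta_0\}}|e^\Phi\psi|^2\,\mathrm{d}x\leq (\eta+\epsilon+\alpha^2)\,h\int_{\{\mathrm{dist}\leq\delta_0\}}|e^\Phi\psi|^2\,\mathrm{d}x\,,
\]
with $\delta_0>0$ a \emph{fixed} collar width. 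To conclude you must bound the right-hand side by $Ch\|\psi\|^2$, but on $\{\mathrm{dist}\leq\delta_0\}$ the weight ranges up to $e^{2\alpha\delta_0/\sqrt h}$, which is unbounded as $h\to0$. Your proposed fix --- the trivial bound $e^\Phi\leq e$ on the much thinner layer $\{\mathrm{dist}\leq\sqrt h/\alpha\}$ --- says nothing about the transition region $\{\sqrt h/\alpha<\mathrm{dist}<\delta_0\}$, which is precisely where the two-region lower bound gives no margin and the weight is already enormous. You acknowledge this tension (``which must be reabsorbed more carefully'') but never resolve it; the final assertion that ``all terms on the right are $\leq C\|\psi\|^2$'' is unjustified. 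What is missing is a \emph{distance-dependent} refinement of the lower bound at the $\sqrt h$ scale, schematically
\[
\mathcal{Q}_h(\phi)\geq\int_\Omega\Bigl(h\beta_{\min}+c\,\min\bigl(\mathrm{dist}(x,\partial\Omega)^2,\,h\bigr)\Bigr)|\phi|^2\,\mathrm{d}x-o(h)\|\phi\|^2\,,
\]
which encodes the exponential decay of the half-space (Lu--Pan / de Gennes) ground state: with it, the integrand $h\beta_{\min}+c\min(d^2,h)-\lambda-\alpha^2 h$ becomes positive outside a set $\{d\lesssim\sqrt h\}$ on which $e^\Phi$ is genuinely bounded, and the estimate closes. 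Without such a refinement (which is what \cite{Raymond09} actually supplies through the one-dimensional model analysis), a two-region estimate of the kind you wrote cannot control the exponential weight across the boundary collar, and the proof does not go through.
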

For $\delta>0$, we consider the $\delta$-neighborhood of the boundary given by
\[\Omega_{\delta}:=\left\{ x\in\Omega:\operatorname{dist}(x,\partial\Omega)<\delta \right\}\,.\]
Due to Proposition \ref{prop.loct}, in the following, we take \[\delta=h^{\frac{1}{2}-\eta}\] 
for $\eta\in(0,\frac{1}{2})$. We consider $\mathscr{L}_{h,\delta}=\left( -ih\nabla-\mathbf{A} \right)^2$ the operator with
magnetic Neumann condition on $\partial\Omega$ and Dirichlet condition on $\partial\Omega_{\delta}\setminus\partial\Omega$.
\begin{corollary}\label{cor.loct}
Let $n\geq 1$. There exist $C, h_0>0$ such that for all $h\in(0,h_0)$,
\[ \lambda_n(\mathscr{L}_{h,\delta})-Ce^{-Ch^{-\eta}}\leq\lambda_n(\mathscr{L}_h)\leq \lambda_n(\mathscr{L}_{h,\delta})\,.\]	
\end{corollary}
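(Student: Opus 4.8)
The plan is to compare the two operators $\mathscr{L}_{h,\delta}$ and $\mathscr{L}_h$ via the min-max principle, exploiting the exponential localization of low-lying eigenfunctions provided by Proposition \ref{prop.loct}. The upper bound $\lambda_n(\mathscr{L}_h)\leq\lambda_n(\mathscr{L}_{h,\delta})$ is the easy half: every function in the form domain of $\mathscr{L}_{h,\delta}$ (namely $H^1$ functions supported in $\overline{\Omega_\delta}$ vanishing on $\partial\Omega_\delta\setminus\partial\Omega$) extends by zero to an element of $H^1(\Omega)$ with the same quadratic form and the same $L^2$-norm, so the form domain of $\mathscr{L}_{h,\delta}$ embeds isometrically into that of $\mathscr{L}_h$ in a way that respects both $\mathcal{Q}_h$ and the norm; applying the min-max characterization of $\lambda_n$ over $n$-dimensional subspaces immediately gives the inequality.

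For the lower bound, first I would apply Proposition \ref{prop.loct} to the operator $\mathscr{L}_h$. Since $\Omega$ is bounded, a spectral upper bound like $\lambda_n(\mathscr{L}_h)\leq C_n h$ holds (for instance from the upper bound \eqref{eq.ub2010}, or by a cruder test-function argument), so for $h$ small all of $\lambda_1(\mathscr{L}_h),\dots,\lambda_n(\mathscr{L}_h)$ lie below $(\beta_{\min}+\epsilon)h$ with $\epsilon$ chosen so that $\beta_{\min}+\epsilon<b_{\min}$. Let $\psi_1,\dots,\psi_n$ be an $L^2$-orthonormal family of associated eigenfunctions and let $V=\mathrm{span}(\psi_1,\dots,\psi_n)$. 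Each $\psi_j$ satisfies the Agmon estimate \eqref{Agm-1}, and one also needs a companion estimate for $(-ih\nabla-\mathbf{A})\psi_j$ weighted by the same exponential, which follows from \eqref{Agm-1} by the standard trick of testing the eigenvalue equation against $e^{2\Phi/\sqrt h}\psi_j$ with $\Phi=\alpha\,\mathrm{dist}(x,\partial\Omega)$ and absorbing the gradient-of-weight term. The idea is then to cut off: pick $\chi\in C^\infty_c$ equal to $1$ on $\Omega_{\delta/2}$ and supported in $\Omega_\delta$, with $|\nabla\chi|\leq C\delta^{-1}$, and consider $W=\mathrm{span}(\chi\psi_1,\dots,\chi\psi_n)$, a subspace of the form domain of $\mathscr{L}_{h,\delta}$ (after extension by zero). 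One checks $\dim W=n$ for $h$ small, because the map $\psi_j\mapsto\chi\psi_j$ differs from the identity by something of size $O(e^{-\alpha\delta/\sqrt h})=O(e^{-\alpha h^{-\eta}})$ in $L^2$, by \eqref{Agm-1} applied on $\Omega\setminus\Omega_{\delta/2}$. The IMS localization formula gives $\mathcal{Q}_h(\chi\psi_j)\leq\lambda_j(\mathscr{L}_h)\|\chi\psi_j\|^2 + Ch^2\delta^{-2}\|(\nabla\chi)\psi_j\|^2$, and the cross terms and the correction $h^2\delta^{-2}e^{-2\alpha\delta/\sqrt h}=O(e^{-Ch^{-\eta}})$ are all exponentially small; more precisely, for any $w=\sum a_j\chi\psi_j\in W$ one gets $\mathcal{Q}_h(w)\leq(\lambda_n(\mathscr{L}_h)+Ce^{-Ch^{-\eta}})\|w\|^2$. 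Feeding $W$ into the min-max principle for $\mathscr{L}_{h,\delta}$ yields $\lambda_n(\mathscr{L}_{h,\delta})\leq\lambda_n(\mathscr{L}_h)+Ce^{-Ch^{-\eta}}$, which is the claimed inequality.

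I expect the main obstacle to be purely bookkeeping: one must keep careful track of the orthonormalization defect of the family $(\chi\psi_j)$ so that when passing from "$\mathcal{Q}_h(w)\leq\ldots\|w\|^2$ for all $w$ in an $n$-dimensional space whose Gram matrix is $\mathrm{Id}+O(e^{-Ch^{-\eta}})$" to an actual eigenvalue bound, the perturbation of the Gram matrix is absorbed into the same exponentially small error (this is routine but needs the exponent $\eta>0$ and the gap $\beta_{\min}+\epsilon<b_{\min}$ to make $\delta/\sqrt h\to\infty$). The only genuinely analytic input beyond Proposition \ref{prop.loct} is the weighted gradient estimate on $(-ih\nabla-\mathbf{A})\psi_j$, which, as noted, is a one-line consequence of the Agmon identity already used to prove Proposition \ref{prop.loct}. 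Everything else is the standard Dirichlet-bracketing-plus-Agmon argument, so the proof is short.
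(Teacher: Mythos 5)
The paper states Corollary \ref{cor.loct} without proof (it is presented as a routine consequence of Proposition \ref{prop.loct}), and your argument is precisely the standard Agmon-plus-IMS bracketing proof that is implicitly intended: the upper bound from extension-by-zero and min-max, the lower bound from cutting off a spectral subspace of $\mathscr{L}_h$ using the Agmon weight, with $\delta/\sqrt h = h^{-\eta}\to\infty$ supplying the $e^{-Ch^{-\eta}}$ error. Your reasoning is correct; the only small imprecisions are cosmetic — the IMS formula already produces $h^2\|(\nabla\chi)\psi_j\|^2$ rather than $Ch^2\delta^{-2}\|(\nabla\chi)\psi_j\|^2$ (the $\delta^{-2}$ appears only after bounding $|\nabla\chi|$), and the a priori bound you invoke to apply Proposition \ref{prop.loct} should be stated directly as $\lambda_n(\mathscr{L}_h)\leq(\beta_{\min}+o(1))h$ (available from a quasimode construction or from \eqref{eq.ub2010}), not merely as $\lambda_n(\mathscr{L}_h)\leq C_n h$ — but neither affects the conclusion.
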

Thanks to Corollary \ref{cor.loct}, we may focus on the spectral analysis of $\mathscr{L}_{h,\delta}$. The following proposition can be found in \cite[Chapter 9]{FH10} and \cite[Theorem 4.3]{HM02} (see also the proof of \cite[Prop. 2.9]{HR22}).

\begin{proposition}[Localization near $x_0$]\label{prop.locx0}
	    Let $M>0$. There exist $C, h_0>0$ and $\alpha>0$ such that, for all $h\in(0,h_0)$, and all eigenfunctions $\psi$ of $\mathscr{L}_{h,\delta}$ associated with an eigenvalue $\lambda$ such that $\lambda\leq\beta_{\min}h+Mh^{\frac{3}{2}}$, we have
	\begin{equation}
	\int_{\Omega_{\delta}}e^{\frac{2\alpha\mathrm{dist}(x,\partial\Omega)}{\sqrt{h}}}\left| \psi(x) \right|^2\mathrm{d}x+\int_{\Omega_{\delta}}e^{\frac{2\alpha\|x-x_0\|^2}{h^{1/4}}}\left| \psi(x) \right|^2\mathrm{d}x\leq C\left\|\psi\right\|^2\,.
		\label{I1}
	\end{equation}
\end{proposition}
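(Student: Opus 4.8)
The plan is to prove both bounds in \eqref{I1} by the classical Agmon (energy) scheme. Its workhorse is the identity, valid for any real bounded Lipschitz weight $\Phi$ on $\overline{\Omega_\delta}$ and any eigenpair $(\lambda,\psi)$ of $\mathscr{L}_{h,\delta}$,
\[
\mathcal{Q}_h(e^\Phi\psi)=\lambda\|e^\Phi\psi\|^2+\|h(\nabla\Phi)\,e^\Phi\psi\|^2\,,
\]
obtained by integrating $\mathscr{L}_{h,\delta}\psi=\lambda\psi$ against $e^{2\Phi}\overline\psi$ and integrating by parts once, all boundary terms vanishing by the magnetic Neumann condition on $\partial\Omega$ and the Dirichlet condition on $\partial\Omega_\delta\setminus\partial\Omega$. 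The first bound, on the normal direction, is obtained exactly as in the proof of Proposition \ref{prop.loct}, now for $\mathscr{L}_{h,\delta}$ (the extra Dirichlet condition only helps): with $\Phi=\alpha\,\mathrm{dist}(x,\partial\Omega)/\sqrt h$ one has $|h\nabla\Phi|^2=\alpha^2 h$, while $\mathcal{Q}_h(u)\geq(b_{\min}-o(1))h\|u\|^2$ whenever $u$ is supported at distance $\geq Kh^{1/2}$ from $\partial\Omega$ (with $K$ large); since $\lambda\leq\beta_{\min}h+Mh^{3/2}<b_{\min}h$ for $h$ small by Assumption \ref{hyp.main}, taking $\alpha$ small enough absorbs the gradient term. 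In particular $\psi$ is essentially concentrated in a boundary layer of width $\sim\sqrt h$.

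The second bound, on the tangential directions, rests on the one genuinely nontrivial ingredient, the effective lower bound provided by the boundary harmonic approximation: for $h$ small and all $u\in\mathrm{Dom}(\mathcal{Q}_h)$ supported in $\overline{\Omega_\delta}$,
\[
\mathcal{Q}_h(u)\geq h\int_{\Omega_\delta}\bigl(\mathcal{W}(x)-Ch^{1/4}\bigr)|u(x)|^2\,\mathrm{d}x\,,
\]
where $\mathcal{W}(x):=\min\bigl(\beta(\pi(x)),\|\mathbf{B}(x)\|\bigr)$ and $\pi$ is the nearest-point projection onto $\partial\Omega$; by Assumption \ref{hyp.main}, $\mathcal{W}\geq\beta_{\min}$ with equality exactly on $\{\pi(x)=x_0\}$, and $\mathcal{W}(x)-\beta_{\min}\gtrsim\|x-x_0\|^2$ for $x$ near $x_0$ while $\mathcal{W}(x)-\beta_{\min}\geq\epsilon>0$ for $\|x-x_0\|\geq R$. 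This lower bound is established, using the normal localization above to reduce to a layer of width $\sqrt h$, by a tangential partition of unity at an intermediate scale, a local change of coordinates and gauge, and a comparison of each localized block with the half-space model $\|\mathbf{B}\|\mathfrak{L}_\theta$, whose ground energy is $\beta$ (here the continuity and monotonicity of $\mathbf{e}$ enter). Granting it, I would run the energy identity with $\Phi=\alpha\|x-x_0\|^2/h^{1/4}$ (bounded on $\Omega_\delta$, so no truncation is needed), for which $|h\nabla\Phi|^2=4\alpha^2 h^{3/2}\|x-x_0\|^2$; combining the two displays with $\lambda\leq\beta_{\min}h+Mh^{3/2}$ gives
\[
\int_{\Omega_\delta}h\bigl(\mathcal{W}(x)-\beta_{\min}-Ch^{1/4}-Mh^{1/2}-4\alpha^2 h^{1/2}\|x-x_0\|^2\bigr)|e^\Phi\psi|^2\,\mathrm{d}x\leq 0\,.
\]
For $h$ small the bracket is $\geq c_1 h^{1/4}$ outside the set $S=\{\|x-x_0\|^2<C'h^{1/4}\}$ (the quadratic term $\tfrac{c_0}{2}\|x-x_0\|^2$, resp. the fixed gap $\tfrac{\epsilon}{2}$, dominating the $h^{1/4}$- and $h^{1/2}$-remainders), while on $S$ both the bracket and $e^{2\Phi}$ are bounded; splitting the integral over $S$ and $S^c$ then yields $\int_{\Omega_\delta}e^{2\Phi}|\psi|^2\leq C\|\psi\|^2$. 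Adding the two weighted estimates (equivalently, running the whole argument with the sum of the two weights) gives \eqref{I1}.

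The only real obstacle is the effective lower bound displayed above: one must verify that, besides the leading Lu--Pan energy $\|\mathbf{B}\|\mathbf{e}(\theta)=\beta$, the remainders produced by the varying coefficients, the boundary curvature, and the commutators with the tangential partition of unity are all $o(h^{1/4})$ relative to $h$ — that is, small compared with the scale $h^{1/4}$ at which the non-degenerate minimum of $\beta$ becomes visible. This is the content of the boundary estimates in \cite{Raymond09}, \cite[Chapter 9]{FH10} and \cite[Theorem 4.3]{HM02}; everything else above is routine Agmon bookkeeping.
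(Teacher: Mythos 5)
The paper does not supply a proof of this proposition: it only refers to \cite[Chapter 9]{FH10}, \cite[Theorem 4.3]{HM02} and the analogous argument in \cite{HR22}. Your proposal is a correct reconstruction of exactly the argument found in those references: the Agmon energy identity with, successively, the linear normal weight $\alpha\,\mathrm{dist}(x,\partial\Omega)/\sqrt h$ and the quadratic tangential weight $\alpha\|x-x_0\|^2/h^{1/4}$, combined with the two standard lower bounds for the magnetic quadratic form (the bulk bound by $(b_{\min}-o(1))h$ away from the boundary, and the effective boundary bound by $h(\mathcal{W}(x)-Ch^{1/4})$). You identify precisely the one nontrivial input --- the effective potential lower bound with $O(h^{5/4})$ error --- and correctly defer to the cited references for it, while carrying out the surrounding Agmon bookkeeping in full. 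The power $h^{1/4}$ in the error of the effective lower bound is exactly what fixes the localization scale $\|x-x_0\|^2\sim h^{1/4}$ in \eqref{I1}, and your bookkeeping (in particular the comparison over $S$ and $S^c$, and the absorption of the $4\alpha^2h^{1/2}\|x-x_0\|^2$ term for $\alpha$ small) is sound.

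Two small points of precision, neither of which is a gap. First, the Taylor lower bound near $x_0$ really reads $\mathcal{W}(x)-\beta_{\min}\gtrsim\|\pi(x)-x_0\|^2$ (not $\|x-x_0\|^2$); the discrepancy is $\mathrm{dist}(x,\partial\Omega)^2$, which on $\Omega_\delta$ is $O(h^{1-2\eta})$ and is harmless once one uses the combined weight $\alpha\,\mathrm{dist}/\sqrt h+\alpha'\|x-x_0\|^2/h^{1/4}$, as you note at the end. Second, for the normal estimate the $\min(\beta\circ\pi,\|\mathbf{B}\|)$ potential alone would only give the non-strict bound $\geq\beta_{\min}h$ in the bulk, which is not enough; one needs, as you actually invoke, the sharper bulk lower bound $\mathcal{Q}_h(u)\geq(b_{\min}-o(1))h\|u\|^2$ at distance $\gtrsim\sqrt h$ from $\partial\Omega$, together with $\beta_{\min}<b_{\min}$ from Assumption \ref{hyp.main}. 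Your proposal correctly keeps these two lower bounds separate.
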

Proposition \ref{prop.locx0} invites us to consider a local chart near $x_0$ and to write the operator in the corresponding coordinates. In order to simplify our analysis, we construct below a system of coordinates compatible with the geometry of the magnetic field.

\subsection{Adapted coordinates near $x_0$}
This section is devoted to introduce coordinates adapted to the magnetic field. Most of the properties of our coordinates system have been established in \cite{HR22}.
\subsubsection{Coordinate in the direction of the magnetic field on the boundary}
We set
\[\mathbf{b}(x)=\frac{\mathbf{B}(x)}{\|\mathbf{B}(x)\|}\,,\]
and we consider its projection on the tangent plane at $x\in\partial\Omega$:
\[\mathbf{b}^{\parallel}(x)=\mathbf{b}(x)-\langle\mathbf{b}(x),\mathbf{n}(x)\rangle\mathbf{n}(x)\,,\]
where $\mathbf{n}$ is the outward pointing normal.

Due to Assumption \ref{hyp.main}, near $x_0$, the vector field $\mathbf{b}^{\parallel}$ does not vanish. This allows to consider the unit vector field
\[\mathbf{f}(x)=\frac{\mathbf{b}^{\parallel}(x)}{\|\mathbf{b}^{\parallel}(x)\|}\]
and the associated integral curve $\gamma$ given by
\[\gamma'(s)=\mathbf{f}(\gamma(s))\,,\quad \gamma(0)=x_0\,,\]
which is well-defined on $(-s_0,s_0)$ for some $s_0>0$.  Clearly, $\gamma$ is smooth and with values in $\partial\Omega$.

\subsubsection{Coordinates on the boundary}
Denoting by $K$ the second fundamental form of $\partial\Omega$ associated to the Weingarten map defined by
\[\forall U,V\in T_x\partial\Omega\,,\quad K_x(U,V)=\langle \mathrm{d} \mathbf{n}_x(U),V\rangle\,,\]
we can consider the ODE with parameter $s$ of unknown $r \mapsto \gamma(r,s)$
\[\partial^2_r\gamma(r,s)=-K(\partial_r\gamma(r,s), \partial_r\gamma(r,s))\mathbf{n}(\gamma(r,s))\,,\]
with initial conditions
\[\gamma(0,s)=\gamma(s)\,,\quad \partial_r\gamma(0,s)=-\gamma'(s)^{\perp}\,,\]
where $\perp$ is understood in the tangent space. The minus is here so that $(\partial_r\gamma,\partial_s\gamma,\mathbf{n})$ is a \emph{direct} orthonormal basis. This ODE has a unique smooth solution $(-r_0,r_0)\times(-s_0,s_0)\ni(r,s)\mapsto\gamma(r,s)$ where $r_0>0$ is chosen small enough. Let us gather the important properties of $(r,s)\mapsto\gamma(r,s)$. Their proofs may be found in \cite{HR22}.

\begin{proposition}\label{prop.grs}
The function $(r,s)\mapsto\gamma(r,s)$ is valued in $\partial\Omega$.
Moreover, we have 
\[|\partial_r\gamma(r,s)|=1\,,\quad\langle\partial_r\gamma,\partial_s\gamma\rangle=0\,.\]
In this chart $\gamma$, the first fundamental form on $\partial\Omega$ is given by the matrix
\[g(r,s)=\begin{pmatrix}
			1&0\\
			0&\alpha(r,s)
\end{pmatrix}
		\,,\quad \alpha(r,s)=|\partial_s\gamma(r,s)|^2\,.\]	
For all $s \in (-s_0,s_0)$, we have $\alpha(0,s)=1$ and $\partial_s\alpha(0,s)=0$.
	
\end{proposition}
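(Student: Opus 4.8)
\textbf{Proof plan for Proposition \ref{prop.grs}.}

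The plan is to read off every assertion directly from the defining ODE for $r\mapsto\gamma(r,s)$, which is nothing but the geodesic equation on $\partial\Omega$ written in the ambient space $\mathbb{R}^3$: a curve $r\mapsto\gamma(r,s)$ lying on $\partial\Omega$ is a geodesic of the induced metric if and only if its acceleration is normal to the surface, and for a hypersurface the normal component of the acceleration of a unit-speed curve is exactly $-K(\gamma',\gamma')\mathbf{n}$. First I would check that $\gamma(r,s)\in\partial\Omega$: differentiating $\langle\mathbf{n}(\gamma(r,s))-\text{(constant)}\rangle$ is not quite the right bookkeeping, so instead I would note that the vector field $X\mapsto -K_x(X,X)\mathbf{n}(x)$ on $T\partial\Omega$ is the geodesic spray of $\partial\Omega$ expressed in $\mathbb{R}^3$; since the initial data $\gamma(0,s)=\gamma(s)\in\partial\Omega$ and $\partial_r\gamma(0,s)=-\gamma'(s)^\perp\in T_{\gamma(s)}\partial\Omega$ are tangent to the surface, the solution stays on $\partial\Omega$ by uniqueness for the geodesic flow (equivalently, one shows $\partial_r(\operatorname{dist}(\gamma,\partial\Omega))$ and its first normal derivative vanish identically, forcing $\gamma$ to remain on $\partial\Omega$). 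Smoothness and the existence on $(-r_0,r_0)\times(-s_0,s_0)$ for $r_0$ small are just smooth dependence on initial conditions for ODEs, the data $s\mapsto(\gamma(s),-\gamma'(s)^\perp)$ being smooth on $(-s_0,s_0)$.

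Next I would prove $|\partial_r\gamma(r,s)|=1$. Compute $\partial_r|\partial_r\gamma|^2 = 2\langle\partial_r^2\gamma,\partial_r\gamma\rangle = -2K(\partial_r\gamma,\partial_r\gamma)\langle\mathbf{n}(\gamma),\partial_r\gamma\rangle$. Since $\gamma(r,s)\in\partial\Omega$, the tangent vector $\partial_r\gamma$ is orthogonal to $\mathbf{n}(\gamma)$, so this derivative vanishes; with $|\partial_r\gamma(0,s)|=|\gamma'(s)^\perp|=|\gamma'(s)|=|\mathbf{f}(\gamma(s))|=1$, we get $|\partial_r\gamma|\equiv 1$. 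For the orthogonality $\langle\partial_r\gamma,\partial_s\gamma\rangle=0$, the standard trick for geodesic (Fermi-type) coordinates applies: set $E(r,s)=\langle\partial_r\gamma,\partial_s\gamma\rangle$ and compute
\[
\partial_r E=\langle\partial_r^2\gamma,\partial_s\gamma\rangle+\langle\partial_r\gamma,\partial_r\partial_s\gamma\rangle
=-K(\partial_r\gamma,\partial_r\gamma)\langle\mathbf{n}(\gamma),\partial_s\gamma\rangle+\tfrac12\partial_s|\partial_r\gamma|^2\,.
\]
Both terms vanish: the first because $\partial_s\gamma\in T_{\gamma}\partial\Omega\perp\mathbf{n}(\gamma)$, the second because $|\partial_r\gamma|^2\equiv 1$. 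Hence $E(r,s)=E(0,s)=\langle-\gamma'(s)^\perp,\gamma'(s)\rangle=0$, since $\gamma'(s)^\perp$ is by definition orthogonal to $\gamma'(s)$ in the tangent space. This gives the stated block-diagonal form $g=\operatorname{diag}(1,\alpha)$ with $\alpha(r,s)=|\partial_s\gamma(r,s)|^2$.

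Finally I would verify the values at $r=0$. At $r=0$ the curve $s\mapsto\gamma(0,s)=\gamma(s)$ is the unit-speed integral curve of $\mathbf{f}$, so $|\partial_s\gamma(0,s)|=|\mathbf{f}(\gamma(s))|=1$, i.e.\ $\alpha(0,s)=1$. For $\partial_s\alpha(0,s)=0$ I would use the well-known fact that in geodesic normal-type coordinates the metric coefficient has vanishing first transverse derivative on the base curve; concretely, with $F(r,s)=\alpha(r,s)=\langle\partial_s\gamma,\partial_s\gamma\rangle$ one computes $\partial_r F=2\langle\partial_r\partial_s\gamma,\partial_s\gamma\rangle=2\partial_s\langle\partial_r\gamma,\partial_s\gamma\rangle-2\langle\partial_r\gamma,\partial_s^2\gamma\rangle = -2\langle\partial_r\gamma,\partial_s^2\gamma\rangle$ using $\langle\partial_r\gamma,\partial_s\gamma\rangle\equiv 0$; evaluating at $r=0$, $\partial_s^2\gamma(0,s)$ is the acceleration of the curve $\gamma$ which by its definition $\gamma'(s)=\mathbf{f}(\gamma(s))$ is a combination of a tangential part (orthogonal to $\partial_r\gamma(0,s)=-\gamma'(s)^\perp$, since $\gamma$ has unit speed so its tangential acceleration is parallel to $\gamma'(s)$) and a normal part (orthogonal to the tangent vector $\partial_r\gamma(0,s)$); hence $\langle\partial_r\gamma(0,s),\partial_s^2\gamma(0,s)\rangle=0$ and $\partial_r\alpha(0,s)=0$.

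The routine calculus above is all that is needed; the only point requiring genuine care is the first one, namely that the solution of the ambient ODE really does stay on $\partial\Omega$. The cleanest way to make that rigorous, and the step I expect to spend the most effort on, is to recognize the right-hand side $-K_x(X,X)\mathbf{n}(x)$ as the restriction to $T\partial\Omega$ of the geodesic vector field of $\partial\Omega$ (via the second fundamental form / Gauss formula $\nabla^{\mathbb{R}^3}_{\gamma'}\gamma' = \nabla^{\partial\Omega}_{\gamma'}\gamma' + \mathrm{I\!I}(\gamma',\gamma')$) and then invoke uniqueness for the geodesic flow with the given tangent initial data — this is exactly the content deferred to \cite{HR22}, which we may cite.
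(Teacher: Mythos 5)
The paper defers the proof to \cite{HR22}, so there is no in-paper argument to compare against; I will assess your proposal on its own terms. Your treatment of the first four assertions — that $\gamma(\cdot,s)$ stays on $\partial\Omega$ (by recognizing the ambient ODE as the geodesic spray of $\partial\Omega$ via the Gauss formula and invoking ODE uniqueness with tangent initial data), that $|\partial_r\gamma|\equiv 1$, that $\langle\partial_r\gamma,\partial_s\gamma\rangle\equiv 0$, and that $\alpha(0,s)=1$ — is correct and is the standard Fermi-coordinate computation.

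The final step, however, contains a genuine error. The proposition asserts $\partial_s\alpha(0,s)=0$, which is an immediate (indeed trivial) consequence of the identity $\alpha(0,s)\equiv 1$ that you just established: differentiating a constant in $s$ gives zero, and nothing further is needed. You instead set out to prove the quite different and much stronger statement $\partial_r\alpha(0,s)=0$, and your argument for it is wrong. You reduce correctly to $\partial_r\alpha(0,s)=-2\langle\partial_r\gamma(0,s),\,\partial_s^2\gamma(0,s)\rangle$, but then claim that the tangential part of $\gamma''(s)$ is \emph{parallel} to $\gamma'(s)$ and hence orthogonal to $\partial_r\gamma(0,s)=-\gamma'(s)^\perp$. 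This is backwards: for a unit-speed curve, $\langle\gamma'',\gamma'\rangle=\tfrac12\partial_s|\gamma'|^2=0$, so the tangential part of $\gamma''$ is \emph{perpendicular} to $\gamma'$, i.e.\ it is a multiple of $\gamma'^\perp=-\partial_r\gamma(0,s)$. In fact $\langle\partial_r\gamma(0,s),\partial_s^2\gamma(0,s)\rangle=\pm\kappa_g(s)$, the geodesic curvature of the base curve $s\mapsto\gamma(s)$, so that $\partial_r\alpha(0,s)=\mp 2\kappa_g(s)$. The base curve here is an integral curve of the unit vector field $\mathbf{f}=\mathbf{b}^\parallel/\|\mathbf{b}^\parallel\|$, which is not a geodesic of $\partial\Omega$ in general, so $\partial_r\alpha(0,s)$ does not vanish. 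This is precisely what distinguishes the chart $\gamma$ from classical Fermi coordinates along a geodesic, and it is why the proposition deliberately records only the (trivial) $s$-derivative and makes no claim about $\partial_r\alpha(0,s)$. To fix the proof, simply drop the entire $\partial_r F$ computation and note that $\partial_s\alpha(0,s)=\partial_s[1]=0$.
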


\subsubsection{Coordinates near the boundary}
We consider the tubular coordinates associated with the chart $\gamma$:
\begin{equation}\label{eq.Gamma}
y=(r,s,t)\mapsto \Gamma(r,s,t) =\gamma(r,s)-t\mathbf{n}(\gamma(r,s))=x\,.
\end{equation}
The map $\Gamma$ is a smooth diffeomorphism from $Q_0:=(-r_0,r_0)\times(-s_0, s_0)\times(0,t_0)$ to $\Gamma(Q_0)$, as soon as $t_0>0$ is chosen small enough.
The differential of $\Gamma$ can be written as
\begin{equation}\label{eq.dPhiy}
	\mathrm{d}\Gamma_{y}=[(\mathrm{Id}-t\mathrm{d}\mathbf{n})(\partial_r\gamma),(\mathrm{Id}-t\mathrm{d}\mathbf{n})(\partial_s\gamma),-\mathbf{n}]\,,
\end{equation}
and the Euclidean metrics becomes
\begin{equation}\label{eq.G}
	\mathbf{G}=(\mathrm{d}\Gamma)^{\mathrm{T}}\mathrm{d}\Gamma=
	\begin{pmatrix}
		\mathbf{g}&0\\
		0&1
	\end{pmatrix}
	\,,
\end{equation}
with
\[
\mathbf{g}(r,s,t)=
\begin{pmatrix}
	\|(\mathrm{Id}-t\mathrm{d}\mathbf{n})(\partial_r\gamma)\|^2&\langle(\mathrm{Id}-t\mathrm{d}\mathbf{n})(\partial_r\gamma),(\mathrm{Id}-t\mathrm{d}\mathbf{n})(\partial_s\gamma)\rangle\\
	\langle(\mathrm{Id}-t\mathrm{d}\mathbf{n})(\partial_r\gamma),(\mathrm{Id}-t\mathrm{d}\mathbf{n})(\partial_s\gamma)\rangle&\|(\mathrm{Id}-t\mathrm{d}\mathbf{n})(\partial_s\gamma)\|^2
\end{pmatrix}
\,.\]
We have $g(r,s)=\mathbf{g}(r,s,0)$, where $g$ is defined in Proposition \ref{prop.grs}.
\subsubsection{The magnetic form in tubular coordinates}
In this section, we discuss the expression of the magnetic field in the coodinates induced by $\Gamma$. This discusssion can be found in \cite[Section 0.1.2.2]{Raymond17} and \cite[Section 3.2]{HR22}. We consider the $1$-form
\[\sigma=\mathbf{A}\cdot\mathrm{d} x=\sum_{\ell=1}^3 A_\ell\mathrm{d} x_{\ell}\,.\]
Its exterior derivative is the magnetic $2$-form
\[\omega=\mathrm{d} \sigma=\sum_{1\leq k<\ell\leq 3}(\partial_k A_{\ell}-\partial_{\ell}A_k)\mathrm{d} x_k\wedge\mathrm{d} x_{\ell}\,,\]
which can also be written as
\[\omega=B_3\mathrm{d} x_1\wedge \mathrm{d} x_2-B_2\mathrm{d} x_1\wedge \mathrm{d} x_3+B_1\mathrm{d} x_2\wedge \mathrm{d} x_3\,.\]
Note also that
\[\forall U,V\in\mathbb{R}^3\,,\quad \omega(U,V)=[U,V,\mathbf{B}]=\langle U\times V,\mathbf{B}\rangle\,.\]
Let us now consider the effect of the change of variables $\Gamma(y) = x$. We have
\begin{equation}\label{eq.tildeA}
	\Gamma^*\sigma=\sum_{j=1}^3 \tilde A_j \mathrm{d} y_j\,,\quad \tilde{\mathbf{A}}=(\mathrm{d}\Gamma)^{\mathrm T}\circ\mathbf{A}\circ\Gamma\,,
\end{equation}
and
\[\Gamma^*\omega=\Gamma^*\mathrm{d}\sigma=\mathrm{d} (\Gamma^*\sigma)=[\cdot,\cdot,\nabla\times\tilde{\mathbf{A}}]\,.\]
This also gives that, for all $U,V\in\mathbb{R}^3$,
\[[\mathrm{d}\Gamma(U),\mathrm{d}\Gamma(V),\mathbf{B}]=[U,V,\nabla\times\tilde{\mathbf{A}}]\,,\quad
\mbox{ or } \det\mathrm{d}\Gamma[\cdot,\cdot,\mathrm{d}\Gamma^{-1}(\mathbf{B})]=[\cdot,\cdot,\nabla\times\tilde{\mathbf{A}}]\,,\]
so that,
\[\nabla\times\tilde{\mathbf{A}}=(\det\mathrm{d}\Gamma)\, \mathrm{d}\Gamma^{-1}(\mathbf{B})\,.\]
Note then that using \eqref{eq.G} we get
\begin{equation}\label{eq.coordinatesnewB}
	|\mathbf{g}|^{-\frac12}\nabla\times\tilde{\mathbf{A}}=\mathcal{B}\,,
\end{equation}
where $\mathcal{B}(y):=\mathrm{d}\Gamma^{-1}_y(\mathbf{B}(x))$ corresponds to the coordinates of $\mathbf{B}(y)$ in the image of the canonical basis by $\mathrm{d}\Gamma_y$. With our specific change of coordinates \eqref{eq.Gamma}, we have
\[\mathbf{B}=\mathrm{d}\Gamma(\mathcal{B})=\mathcal{B}_1\left( \operatorname{Id}-t\dd\mathbf{n} \right)\left( \partial_r\gamma \right)+\mathcal{B}_2\left( \operatorname{Id}-t\dd\mathbf{n} \right)\left( \partial_s\gamma \right)-\mathcal{B}_3\mathbf{n}\,.\]
For all $x\in\partial \Omega$, \emph{i.e.} $t=0$, we have
\begin{equation}\label{eq.newB}
\begin{split}\mathbf{B}(x)&=\mathcal{B}_1(r,s,0)\partial_r\gamma+\mathcal{B}_2(r,s,0) \partial_s\gamma -\mathcal{B}_3(r,s,0)\mathbf{n}(\gamma(r,s))\,,\\
\left\| \mathbf{B}(x) \right\|^2&=\mathcal{B}_1^2(r,s,0)+\alpha(r,s)\mathcal{B}_2^2(r,s,0)+\mathcal{B}_3^2(r,s,0)\,.\end{split}
\end{equation}
Moreover, we have
\[\mathcal{B}_1(r,s,0)=\langle \mathbf{B},\partial_r\gamma \rangle\,,\qquad\alpha(r,s)\mathcal{B}_2(r,s,0)=\langle \mathbf{B},\partial_s\gamma \rangle\,,\qquad\mathcal{B}_3(r,s,0)=-\langle \mathbf{B},\mathbf{n} \rangle\,.\]
Note that our choice of coordinate $s$ (along the projection of the magnetic field on the tangent plane) and of transverse coordinate $r$ implies that
\[\mathcal{B}_1(0,s,0)=0\,,\quad \mathcal{B}_2(0,s,0)>0\,,\]
thanks to Assumption \ref{hyp.main}.
\begin{definition}
In a neighbordhood of $(0,0)$, we can consider the unique smooth function $\theta$ such that
\[\mathbf{B}\left( \gamma(r,s) \right)\cdot\mathbf{n}\left( \gamma(r,s) \right)=\left\| \mathbf{B}\left( \gamma(r,s) \right) \right\|\sin\theta(r,s)\]
and satisfying $\theta(r,s)\in\left(0,\frac\pi2\right)$. With a sligh abuse of notation, we let
\[\beta(r,s)=\|\mathbf{B}(\gamma(r,s))\|\mathbf{e}(\theta(r,s))\,.\]
\end{definition}
\begin{remark}\label{rem.B3}
We have
\[\mathcal{B}_3(r,s)=-\left\| \mathbf{B}(\gamma(r,s)) \right\|\sin\left( \theta(r,s) \right)\,.\]
Moreover, since $\mathcal{B}_2>0$ and $\alpha(0,s)=1$,
\[\mathcal{B}_2(0,s,0)=\|\mathbf{B}(\gamma(0,s))\|\cos\theta(0,s)\,,\quad \mathcal{B}_3(0,s,0)=-\|\mathbf{B}(\gamma(0,s))\|\sin\theta(0,s)\,.\]
\end{remark}
In fact, we can choose a suitable explicit $\tilde{\mathbf{A}}$ such that \eqref{eq.coordinatesnewB} holds in a neighborhood of $(0,0,0)$.
\begin{lemma}\label{lem.changegauge}
Considering
\[\begin{split}
	\tilde A_1(r,s,t)&=\int_0^t[|\mathbf{g}|^{\frac12}\mathcal{B}_2](r,s,\tau)\mathrm{d}\tau\,,\\
	\tilde A_2(r,s,t)&=  -\int_0^t[|\mathbf{g}|^{\frac12}\mathcal{B}_1](r,s,\tau)\mathrm{d}\tau+\int_0^r [|\mathbf{g}|^{\frac12}\mathcal{B}_3](u,s,0) \mathrm{d}u\,,\\
	\tilde A_3(r,s,t)&=0\,,
\end{split}\]	
we have $\nabla\times\tilde{\mathbf{A}}(r,s,t)=|\mathbf{g}|^{\frac12}\mathcal{B}(r,s,t)$.
\end{lemma}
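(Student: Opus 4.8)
The plan is to compute $\nabla\times\tilde{\mathbf{A}}$ directly from the three given formulas; the only structural ingredient needed is that $|\mathbf{g}|^{\frac12}\mathcal{B}$ is divergence free in the coordinates $(r,s,t)$, that is,
\[\partial_r\left(|\mathbf{g}|^{\frac12}\mathcal{B}_1\right)+\partial_s\left(|\mathbf{g}|^{\frac12}\mathcal{B}_2\right)+\partial_t\left(|\mathbf{g}|^{\frac12}\mathcal{B}_3\right)=0\,.\]
This follows at once from \eqref{eq.coordinatesnewB}: the potential $\tilde{\mathbf{A}}$ of \eqref{eq.tildeA} satisfies $\nabla\times\tilde{\mathbf{A}}=|\mathbf{g}|^{\frac12}\mathcal{B}$, and the divergence of a curl vanishes. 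Equivalently, $\omega=\mathrm{d}\sigma$ is closed, hence so is $\Gamma^*\omega=[\cdot,\cdot,\nabla\times\tilde{\mathbf{A}}]$, and computing its exterior derivative yields precisely $\mathrm{div}\left(|\mathbf{g}|^{\frac12}\mathcal{B}\right)\,\mathrm{d}r\wedge\mathrm{d}s\wedge\mathrm{d}t=0$.

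With this identity in hand, I would check the three components of the curl in the coordinates $(r,s,t)$. Since $\tilde A_3\equiv 0$, the first two follow immediately from the fundamental theorem of calculus:
\[(\nabla\times\tilde{\mathbf{A}})_1=\partial_s\tilde A_3-\partial_t\tilde A_2=\left[|\mathbf{g}|^{\frac12}\mathcal{B}_1\right](r,s,t)\,,\qquad(\nabla\times\tilde{\mathbf{A}})_2=\partial_t\tilde A_1-\partial_r\tilde A_3=\left[|\mathbf{g}|^{\frac12}\mathcal{B}_2\right](r,s,t)\,.\]
For the third one,
\[(\nabla\times\tilde{\mathbf{A}})_3=\partial_r\tilde A_2-\partial_s\tilde A_1=\left[|\mathbf{g}|^{\frac12}\mathcal{B}_3\right](r,s,0)-\int_0^t\left(\partial_r\left[|\mathbf{g}|^{\frac12}\mathcal{B}_1\right]+\partial_s\left[|\mathbf{g}|^{\frac12}\mathcal{B}_2\right]\right)(r,s,\tau)\,\mathrm{d}\tau\,,\]
and using the divergence-free identity to rewrite the integrand as $-\partial_\tau\left[|\mathbf{g}|^{\frac12}\mathcal{B}_3\right](r,s,\tau)$ and then integrating in $\tau$ turns the right-hand side into $\left[|\mathbf{g}|^{\frac12}\mathcal{B}_3\right](r,s,t)$. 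This yields the claimed identity.

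The argument is elementary and I do not expect any genuine obstacle; its single nontrivial input, the divergence-free identity, is a soft consequence of the fact that $|\mathbf{g}|^{\frac12}\mathcal{B}$ is a curl. The only points deserving a moment's care are the sign conventions in the three components of the curl and the relation $|\det\mathrm{d}\Gamma|=|\mathbf{g}|^{\frac12}$ behind \eqref{eq.coordinatesnewB}, both already fixed by \eqref{eq.G} and by the orientation conventions set around \eqref{eq.Gamma} and in Proposition \ref{prop.grs}.
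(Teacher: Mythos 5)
Your proof is correct and follows exactly the route the paper indicates: you derive the divergence-free identity for $|\mathbf{g}|^{\frac12}\mathcal{B}$ from the already-established fact that it equals $\nabla\times\tilde{\mathbf{A}}$ for the pullback potential of \eqref{eq.tildeA}, and you then carry out the (correct) component-by-component curl computation, using that identity only in the third component. This is precisely the "straightforward computation" the paper leaves to the reader, so there is no genuine difference of approach.
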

\begin{proof}
It follows from a straightforward computation and the fact that $|\mathbf{g}|^{\frac12}\mathcal{B}$ is divergence-free.
\end{proof}

\begin{remark}
Note that the proof of Lemma \ref{lem.changegauge} does not involve global geometric quantities on the boundary as in \cite[Prop. 3.3]{HR22} since our analysis is local near $x_0$.	
\end{remark}

\subsection{First approximation of the magnetic Laplacian in local coordinates}\label{sec.Laplacianrst}
If the support of $\psi$ is close enough to $x_0$, we may express $\mathcal{Q}_h(\psi)$ in the local chart given by $\Gamma(y) = x$. Letting $\tilde\psi(y)=\psi\circ\Gamma(y)$, we have then
\[\mathcal{Q}_h(\psi)=\int \langle \mathbf{G}^{-1}(-ih\nabla_y-\tilde{\mathbf{A}}(y))\tilde\psi,(-ih\nabla_y-\tilde{\mathbf{A}}(y))\tilde\psi \rangle |\mathbf{g}|^{\frac12}\mathrm{d} y\,.\]
In the Hilbert space $L^2(|\mathbf{g}|^{\frac12}\mathrm{d} y)$, the operator locally takes the form
\begin{equation}\label{eq.magLaptilde}
	|\mathbf{g}|^{-\frac12}(-ih\nabla_y-\tilde{\mathbf{A}}(y))\cdot|\mathbf{g}|^{\frac12}\mathbf{G}^{-1}(-ih\nabla_y-\tilde{\mathbf{A}}(y))\,,
\end{equation}
where $\mathbf{G}$ is defined in \eqref{eq.G}. From now on, the analysis deviates from \cite{HR22}.

\subsubsection{Expansion with respect to $t$}
Due to the localization near the boundary at the scale $h^{\frac12}$, we are led to replace $\tilde{\mathbf{A}}$ by its Taylor expansion $\tilde{\mathbf{A}}^{[3]}$ at order $3$ and $\mathbf{g}$ and $\mathbf{G}$ by their Taylor expansions at the order $2$. We let
\begin{equation}\label{eq.tildeA3}
\begin{split}
	\tilde A_1^{[3]}(r,s,t)&=t[|\mathbf{g}|^{\frac12}\mathcal{B}_2](r,s,0)+C_2\hat t^2+C_3\hat t^3\,,\\
	\tilde A_2^{[3]}(r,s,t)&=  -t[|\mathbf{g}|^{\frac12}\mathcal{B}_1](r,s,0)+F(r,s)+E_2\hat t^2+E_3\hat t^3\,,\\
	\tilde A^{[3]}_3(r,s,t)&=0\,,
\end{split}
\end{equation}
where $\hat t=t\chi(h^{-\frac12+\eta}t)$ for some smooth cutoff function $\chi$ equal to $1$ near $0$ and where
\begin{equation}\label{eq.F}
F(r,s)=\int_0^r [|\mathbf{g}|^{\frac12}\mathcal{B}_3](\ell,s,0) \mathrm{d}\ell\,,
\end{equation}
and the functions $C_j(r,s)$ and $E_j(r,s)$ are smooth. We emphasize that we only truncate the terms of order at least $2$ in $t$ in the above expression.
 
Due to Assumption \ref{hyp.main}, $(r,s)\mapsto (F(r,s),s)$ is a smooth diffeomorphism on a neighborhood of $(0,0)$.

We also consider the expansions
\[|\mathbf{g}|^{\frac12}(r,s,t)=m(r,s,t)+\mathscr{O}(t^3)\,,\quad \mathbf{G}^{-1}=M(r,s,t)^{-1}+\mathscr{O}(t^3)\,,\]
with
\[m(r,s,t)=a_0(r,s)+\hat ta_1(r,s)+\hat t^2a_2(r,s)\,,\quad M(r,s,t)=M_0(r,s)+\hat tM_1(r,s)+\hat t^2M_2(r,s)\,.\]
Recall that $|\mathbf{g}|(r,s,0)=\alpha(r,s)$.

\subsubsection{Extension of the functions of the tangential variables}\label{sec.ext}

It will be convenient to work on the half-space $\mathbb{R}^3_+$ instead of a neighborhood of $(0,0,0)$. 


Given $\epsilon_0>0$, consider a smooth odd function $\zeta : \mathbb{R}\to\mathbb{R}$ such that $\zeta(x)=x$ on $[0,\epsilon_0]$ and $\zeta(x)=2\epsilon_0$, for all $x\geq 2\epsilon_0$. In particular, $\|\zeta\|_\infty=2\epsilon_0$. We let
\[Z(r,s)=(\zeta(r),\zeta(s))\,.\]

The following lemma is a straightforward consequence of Assumption \ref{hyp.main}.
\begin{lemma}\label{lem.epsilon0}
For $\epsilon_0$ small enough, the function $\hat\beta=\beta\circ Z : \mathbb{R}^2\to\mathbb{R}_+$ is smooth and has a unique minimum (at $(0,0)$), which is non-degenerate and not attained at infinity.
\end{lemma}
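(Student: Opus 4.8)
The plan is to verify the three claimed properties of $\hat\beta=\beta\circ Z$ directly from Assumption \ref{hyp.main} and the construction of $Z$. First I would note that $\zeta$ is smooth, so $Z=(\zeta(r),\zeta(s))$ is a smooth map $\mathbb{R}^2\to\mathbb{R}^2$, and since $\beta$ is a smooth function defined on a neighborhood of $(0,0)$ that contains $Z(\mathbb{R}^2)=[-2\epsilon_0,2\epsilon_0]^2$ as soon as $\epsilon_0$ is small enough (here I use that $\beta$, as defined via the adapted coordinates, is smooth on a fixed neighborhood of the origin, and $\theta(r,s)\in(0,\frac\pi2)$ there), the composition $\hat\beta$ is smooth on all of $\mathbb{R}^2$.

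Next I would locate the minimum. By Assumption \ref{hyp.main}, $\beta$ restricted to the boundary has a unique non-degenerate minimum at $x_0$, which in the adapted coordinates corresponds to $(r,s)=(0,0)$; in particular there is a neighborhood $V$ of $(0,0)$ on which $\beta(r,s)>\beta(0,0)=\beta_{\min}$ for $(r,s)\neq(0,0)$. Choosing $\epsilon_0$ small enough that $[-2\epsilon_0,2\epsilon_0]^2\subset V$, and using that $\zeta$ is odd with $\zeta(0)=0$ and $\zeta(\mathbb{R})=[-2\epsilon_0,2\epsilon_0]$, we get $\hat\beta(r,s)=\beta(\zeta(r),\zeta(s))\geq\beta_{\min}$ everywhere, with equality iff $(\zeta(r),\zeta(s))=(0,0)$, i.e.\ iff $(r,s)=(0,0)$ (since $\zeta$ vanishes only at $0$, being equal to the identity near $0$ and equal to $2\epsilon_0$ on $[\epsilon_0,\infty)$, hence never zero away from the origin — more precisely $\zeta(x)=0\iff x=0$ because $\zeta$ is strictly positive on $(0,\epsilon_0]$ and equal to $2\epsilon_0>0$ beyond, and odd). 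So $(0,0)$ is the unique minimizer. Moreover, for $\|(r,s)\|\geq 1$ we have $\zeta(r),\zeta(s)$ both frozen (or at least staying in $[\epsilon_0,2\epsilon_0]\cup[-2\epsilon_0,-\epsilon_0]$ in at least one coordinate when one of $|r|,|s|$ exceeds $\epsilon_0$), so $\hat\beta$ stays bounded away from $\beta_{\min}$ outside a compact set; hence the infimum is not attained at infinity.

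Finally, for non-degeneracy at $(0,0)$: since $\zeta$ equals the identity on a neighborhood of $0$, the map $Z$ equals the identity on a neighborhood of $(0,0)$, so $\hat\beta$ coincides with $\beta$ there, and therefore $\mathrm{Hess}_{(0,0)}\hat\beta=\mathrm{Hess}_{(0,0)}\beta$, which is positive definite by Assumption \ref{hyp.main}. This gives all three assertions. I do not expect any genuine obstacle here; the only point requiring a little care is making the choice of $\epsilon_0$ uniform — namely fixing it small enough that $[-2\epsilon_0,2\epsilon_0]^2$ lies inside both the domain of definition of $\beta$ in the adapted coordinates and the neighborhood $V$ on which $(0,0)$ is the strict minimum — and this is exactly the content of "for $\epsilon_0$ small enough" in the statement, so the lemma follows.
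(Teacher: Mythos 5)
Your argument is correct and follows exactly the route the paper has in mind: the paper gives no proof at all, simply asserting that the lemma is ``a straightforward consequence of Assumption~\ref{hyp.main}'', and your proof supplies precisely those omitted details (smoothness by composition, uniqueness of the minimizer since $Z$ maps $\mathbb{R}^2$ into a small box where $\beta$ has its strict minimum at the origin, non-degeneracy since $Z=\mathrm{Id}$ near $(0,0)$, and boundedness away from $\beta_{\min}$ outside a compact set). The one small point worth tightening is your parenthetical justification that $\zeta(x)=0\iff x=0$: the paper's description of $\zeta$ only pins down its values on $[0,\epsilon_0]$ and $[2\epsilon_0,\infty)$, so one should note that $\zeta$ is implicitly chosen (e.g.\ monotone) so that it does not vanish on $(\epsilon_0,2\epsilon_0)$ --- otherwise $\hat\beta$ could have spurious minimizers --- but this is clearly what is intended by the construction.
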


Let us now replace the function $\mathscr{B} : (r,s)\mapsto \alpha(r,s)^{\frac12}\mathcal{B}(r,s,0)$ by $\mathscr{B}\circ Z$ in \eqref{eq.tildeA3} and \eqref{eq.F}. We replace the other coefficients $C_j$ and $E_j$ by $C_j\circ Z$ and $E_j\circ Z$. Note that we have the following.

\begin{lemma}\label{lem.J}
For $\epsilon_0$ small enough, the function 
\[\mathscr{J} : \mathbb{R}^2\ni(r,s)\mapsto \left(\int_0^r [|\mathbf{g}|^{\frac12}\mathcal{B}_3](Z(\ell,s),0) \mathrm{d}\ell,s\right)=u=(u_1,u_2)\in\mathbb{R}^2\] is smooth and it is a global diffeomorphism.
\end{lemma}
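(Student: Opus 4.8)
The plan is to exploit the triangular structure of $\mathscr{J}$: its second component is simply $s$, and its first component $u_1(r,s)=\int_0^r[|\mathbf{g}|^{\frac12}\mathcal{B}_3](Z(\ell,s),0)\,\mathrm{d}\ell$ is, for each fixed $s$, a strictly monotone and coercive reparametrization of $r$. Once this is established, the statement follows from the inverse function theorem.

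First I would record the key sign condition. By Remark \ref{rem.B3} and Assumption \ref{hyp.main}, $\mathcal{B}_3(0,0,0)=-\|\mathbf{B}(x_0)\|\sin\theta(x_0)\neq 0$ because $\theta(x_0)\in(0,\frac\pi2)$, while $|\mathbf{g}|^{\frac12}(0,0,0)=\alpha(0,0)^{1/2}=1$. Hence the smooth function $g_3:=[|\mathbf{g}|^{\frac12}\mathcal{B}_3](\cdot,\cdot,0)$ is nonzero at the origin, of sign $-\mathrm{sgn}\sin\theta(x_0)$. By continuity we may shrink $\epsilon_0$ so that $[-2\epsilon_0,2\epsilon_0]^2$ lies in the domain of $g_3$ and $g_3\leq -c_0<0$ on this box for some $c_0>0$. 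Since $Z$ maps all of $\mathbb{R}^2$ into $[-2\epsilon_0,2\epsilon_0]^2$, we get $g_3(Z(\ell,s),0)\leq -c_0<0$ for every $(\ell,s)\in\mathbb{R}^2$. As $\zeta$ is smooth on $\mathbb{R}$, the composition $g_3\circ(Z,0)$ is smooth on $\mathbb{R}^2$, so $u_1$ depends smoothly on $(r,s)$, and $\mathscr{J}$ is smooth.

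Next I would compute the differential: $\mathrm{d}\mathscr{J}(r,s)$ is lower triangular with diagonal entries $\partial_r u_1(r,s)=g_3(Z(r,s),0)$ and $1$, so $\det\mathrm{d}\mathscr{J}(r,s)=g_3(Z(r,s),0)\leq -c_0<0$ everywhere; thus $\mathscr{J}$ is a local diffeomorphism. For injectivity, $\mathscr{J}(r,s)=\mathscr{J}(r',s')$ forces $s=s'$, after which $r\mapsto u_1(r,s)$ is strictly decreasing (derivative $\leq -c_0<0$) hence injective, giving $r=r'$. For surjectivity, fix $(u_1,u_2)$ and set $s=u_2$; from $u_1(0,s)=0$ and $|\partial_r u_1|\geq c_0$ one gets $u_1(r,s)\to+\infty$ as $r\to-\infty$ and $u_1(r,s)\to-\infty$ as $r\to+\infty$, so $r\mapsto u_1(r,s)$ is a continuous strictly decreasing bijection of $\mathbb{R}$ and attains the prescribed value. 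Therefore $\mathscr{J}$ is a smooth bijection with everywhere invertible differential, hence a global diffeomorphism with smooth inverse.

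The only point deserving care is the \emph{uniform} bound $g_3\circ(Z,0)\leq -c_0<0$ on all of $\mathbb{R}^2$: this is exactly where Assumption \ref{hyp.main} is used (through $\theta(x_0)\neq 0$, i.e. the normal component of $\mathbf{B}$ at $x_0$ does not vanish), together with the fact that the cutoff $\zeta$ confines the image of $Z$ to an arbitrarily small box around the origin once $\epsilon_0$ is small. Beyond that, the argument is the elementary observation that a triangular map whose first component is strictly monotone and coercive in the first variable is a global diffeomorphism.
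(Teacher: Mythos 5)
Your proof is correct, and it fills in details the paper itself omits: the lemma is stated without a written proof, the authors evidently regarding it as a routine consequence of the extension construction. Your argument is exactly the one that construction is designed to make work. The two ingredients are identified precisely: Assumption \ref{hyp.main} (via $\theta(x_0)\in(0,\frac\pi2)$ and Remark \ref{rem.B3}) gives $[|\mathbf{g}|^{\frac12}\mathcal{B}_3](0,0,0)=-\sin\theta(x_0)\,\|\mathbf{B}(x_0)\|<0$, and the cutoff $Z$ confines the argument to the box $[-2\epsilon_0,2\epsilon_0]^2$, so that after shrinking $\epsilon_0$ the integrand is uniformly bounded away from zero (and negative) on all of $\mathbb{R}^2$. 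From there the triangular structure, the everywhere-invertible Jacobian, strict monotonicity in $r$ for fixed $s$, and the coercivity $u_1(r,s)\to\mp\infty$ as $r\to\pm\infty$ give injectivity and surjectivity, and the inverse function theorem yields smoothness of the inverse. One detail worth keeping, which you did address, is that $Z$ must map into the chart domain $(-r_0,r_0)\times(-s_0,s_0)$ where $|\mathbf{g}|^{1/2}\mathcal{B}_3$ is defined; this is ensured by taking $2\epsilon_0$ small enough, the same constraint that appears implicitly in Lemma \ref{lem.epsilon0}.
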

This leads to consider the new vector potential
\begin{equation}\label{eq.hatA}
	\begin{split}
		 \hat A_1(r,s,t)&=t\overset{\circ}{C}_1+\overset{\circ}{C}_2\hat t^2+\overset{\circ}{C}_3\hat t^3\,,\\
		\hat A_2(r,s,t)&=  -t\overset{\circ}{E}_1+\mathscr{J}_1(r,s)+\overset{\circ}{E}_2\hat t^2+\overset{\circ}{E}_3\hat t^3\,,\\
		\hat A_3(r,s,t)&=0\,,
	\end{split}
\end{equation}
where $C_1=\alpha^{\frac12}\mathcal{B}_2$, $E_1=\alpha^{\frac12}\mathcal{B}_1$ and with the notation $\overset{\circ}{f}=f\circ Z$.

The rest of the article will be devoted to the spectral analysis of the operator associated with the new quadratic form
\[\mathcal{Q}^{\mathrm{app}}_h(\varphi)=\int_{\mathbb{R}^3_+}\langle (\overset{\circ}{M})^{-1}(-ih\nabla_y-\hat{\mathbf{A}}(y))\varphi,(-ih\nabla_y-\hat{\mathbf{A}}(y))\varphi \rangle \overset{\circ}{m}\mathrm{d} y\,.\]
This selfadjoint operator $\mathscr{L}^{\mathrm{app}}_h$ is acting as
\[\overset{\circ}{m}^{-1}(-ih\nabla_y-\hat{\mathbf{A}})\cdot\overset{\circ}{m}(\overset{\circ}{M})^{-1}(-ih\nabla_y-\hat{\mathbf{A}})\,,\]
in the ambient Hilbert space $L^2(\mathbb{R}^3_+,\overset{\circ}{m}\mathrm{d}y)$. This spectral analysis is motivated by the fact that the low-lying spectra of $\mathscr{L}_h$ and $\mathscr{L}^{\mathrm{app}}_h$ coincide modulo $o(h^2)$, in the sense of the following proposition.
\begin{proposition}\label{prop.Lapp}
We have, for all $n\geq 1$,
\[\lambda_n(h)=\lambda_n(\mathscr{L}^{\mathrm{app}}_h)+o(h^2)\,.\]	
\end{proposition}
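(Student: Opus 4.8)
The plan is to compare $\mathscr{L}_h$ and $\mathscr{L}^{\mathrm{app}}_h$ by a two-sided spectral estimate, exploiting that all eigenfunctions associated with eigenvalues $\leq \beta_{\min}h + Mh^{3/2}$ are concentrated in a small neighborhood of $x_0$ and at distance $\lesssim h^{1/2}$ from the boundary (Propositions \ref{prop.loct}, \ref{prop.locx0}, and Corollary \ref{cor.loct}). The first step is to reduce to $\mathscr{L}_{h,\delta}$ with $\delta = h^{1/2-\eta}$, which is already granted by Corollary \ref{cor.loct} with an error $Ce^{-Ch^{-\eta}} = o(h^2)$. On the set $\Gamma(Q_0)$ the operator $\mathscr{L}_{h,\delta}$ is unitarily equivalent, via $\psi \mapsto \psi\circ\Gamma$, to the operator \eqref{eq.magLaptilde} acting in $L^2(|\mathbf{g}|^{1/2}\dd y)$; since the coefficients in \eqref{eq.tildeA3} and the metric expansions differ from the true ones by $\mathscr{O}(t^3)$ for $t$ in the support of $\hat t$ (and the truncation $\hat t = t\chi(h^{-1/2+\eta}t)$ only modifies things where $t \gtrsim h^{1/2-\eta}$, a region where the eigenfunctions are exponentially small by \eqref{I1}), and since replacing a coefficient $f(r,s)$ by $f\circ Z(r,s)$ changes nothing on the ball $\{|r|,|s|<\epsilon_0\}$ where, again by \eqref{I1}, the mass is all but exponentially small, the quadratic forms $\mathcal{Q}_h$ and $\mathcal{Q}^{\mathrm{app}}_h$ agree up to $o(h^2)\|\psi\|^2$ on the relevant spectral subspaces.

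Concretely I would run the min-max argument in both directions. \emph{Upper bound for $\lambda_n(\mathscr{L}^{\mathrm{app}}_h)$:} take $L^2$-orthonormal eigenfunctions $\psi_1,\dots,\psi_n$ of $\mathscr{L}_{h,\delta}$ spanning the first $n$ eigenspaces; multiply each by a cutoff $\chi_0$ supported in $\Gamma(Q_0)$ and equal to $1$ near $x_0$ (the cutoff costs only $o(h^\infty)$ by Propositions \ref{prop.loct}--\ref{prop.locx0}), transport them by $\Gamma$, and extend by $0$ to obtain trial states $\varphi_j \in \mathrm{Dom}(\mathcal{Q}^{\mathrm{app}}_h)$; then $\mathcal{Q}^{\mathrm{app}}_h(\varphi_j) = \mathcal{Q}_h(\chi_0\psi_j) + o(h^2)$ and the Gram matrix is $\mathrm{Id} + o(1)$, so min-max gives $\lambda_n(\mathscr{L}^{\mathrm{app}}_h) \leq \lambda_n(\mathscr{L}_{h,\delta}) + o(h^2) = \lambda_n(h) + o(h^2)$. \emph{Lower bound:} here one first needs an a priori localization statement for the eigenfunctions of $\mathscr{L}^{\mathrm{app}}_h$ itself — namely, that eigenfunctions with eigenvalue $\leq \beta_{\min}h + Mh^{3/2}$ are concentrated near $(0,0,0)$ and at scale $h^{1/2}$ in $t$. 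This follows by the same Agmon-type energy estimates used for Propositions \ref{prop.loct}, \ref{prop.locx0}, using that the approximate symbol still has principal part $\|\mathbf{B}\|^2$ in the bulk and $\hat\beta$ on the boundary with $\hat\beta$ having a non-degenerate minimum by Lemma \ref{lem.epsilon0}; then one reverses the construction above (cut off eigenfunctions of $\mathscr{L}^{\mathrm{app}}_h$, transport back through $\Gamma^{-1}$, extend to $\Omega$) to get $\lambda_n(\mathscr{L}_{h,\delta}) \leq \lambda_n(\mathscr{L}^{\mathrm{app}}_h) + o(h^2)$, and combine with Corollary \ref{cor.loct}.

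The bookkeeping to watch carefully is that every replacement is $o(h^2)$ \emph{relative to the quadratic form evaluated on the localized states}, not in operator norm — so each error term of the shape $\int (\text{coeff error})\,|(-ih\nabla - \mathbf{A})\psi|^2$ or $\int (\text{coeff error})\,|h\nabla\psi|\,|\psi|$ must be controlled using both the weighted $L^2$-bounds \eqref{I1} and the rough a priori bound $\mathcal{Q}_h(\psi) \leq Ch\|\psi\|^2$ (equivalently $\|(-ih\nabla-\mathbf{A})\psi\|^2 \leq Ch\|\psi\|^2$), together with $\|t\psi\| \lesssim h^{1/2}\|\psi\|$ and $\|(r,s)\,\psi\| \lesssim h^{1/8}\|\psi\|$ coming from \eqref{I1}. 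The main obstacle is precisely this last point for the lower bound: one must establish the a priori localization of the eigenfunctions of the \emph{new} operator $\mathscr{L}^{\mathrm{app}}_h$ before one is allowed to compare quadratic forms, and this requires redoing (a simplified version of) the Agmon estimates in the extended geometry — the fact that the coefficients have been frozen/extended outside a neighborhood of $x_0$ is what makes the energy estimate go through globally on $\mathbb{R}^3_+$, but one has to check that the extension has not created spurious low-energy spectrum away from $x_0$, which is guaranteed by Lemma \ref{lem.epsilon0} (the extended $\hat\beta$ still tends to values $> \beta_{\min}$ at infinity) and by $b_{\min} > \beta_{\min}$ in the bulk.
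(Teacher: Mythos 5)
Your proposal follows essentially the same route the paper sketches (the paper omits the detailed proof but describes exactly this strategy: Corollary \ref{cor.loct}, the localization estimates in Proposition \ref{prop.locx0} to remove cutoffs and control the $t$-Taylor remainders, and the min-max theorem, with the remark that the localization also holds for $\mathscr{L}^{\mathrm{app}}_h$ by the same arguments). You correctly identify the one real subtlety — that the lower-bound direction requires redoing the Agmon estimates for $\mathscr{L}^{\mathrm{app}}_h$ on $\mathbb{R}^3_+$, which works because the extension keeps $\hat\beta>\beta_{\min}$ away from the origin and $b_{\min}>\beta_{\min}$ in the bulk — so the proposal is sound.
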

We omit the proof. It follows from Corollary \ref{cor.loct} and the localization estimates given in Proposition \ref{prop.locx0} (which are also true in the coordinates $(r,s,t)$ for those of $\mathscr{L}^{\mathrm{app}}_h$ by using the same arguments) and the Min-max Theorem. These localisation estimates allow to remove the cutoff functions up to remainders of order $\mathscr{O}(h^\infty)$ and to control the remainders of the expansion in $t$.

\section{Change of coordinates and metaplectic transform}\label{sec.3}
In order to perform the spectral analysis of $\mathscr{L}^{\mathrm{app}}_h$, it is convenient to use the change of variable $\mathscr{J}$ given in Lemma \ref{lem.J}. More precisely, we will use the unitary transform induced by $\mathscr{J}$ defined by
\[U:\begin{array}{ccc}	
L^2(\mathbb{R}^3_+,\overset{\circ}{m}\dd y)	&\to& L^2(\mathbb{R}^3_+,\breve m\left|\mathrm{Jac}\mathscr{J}^{-1}\right|\dd u\dd t)\\
\varphi	&\mapsto&\breve\varphi\\
	\end{array}\,,\]
where we used the notation $\breve f(u,t)=f(\mathscr{J}^{-1}(u),t)$ and the slight abuse of notation $\breve{\overset{\circ}{f}}=\breve f$. Then, we focus on the operator $\mathscr{N}_h=U\mathscr{L}^{\mathrm{app}}_hU^{-1}$, acting in $L^2(\mathbb{R}^3_+,\breve m\left|\mathrm{Jac}\mathscr{J}^{-1}\right|\dd u\dd t)$. The operator $\mathscr{N}_h$ is acting as 
\begin{equation}\label{eq.LapptoNh}
\mathscr{N}_h=U\mathscr{L}^{\mathrm{app}}_hU^{-1}={\breve m}^{-1}\mathscr{D}_h\cdot{\breve m}({\breve M})^{-1}\mathscr{D}_h\,,
\end{equation}
where
\[\mathscr{D}_h=\begin{pmatrix}
	-ih{\breve C}_0\partial_{u_1}-t{\breve C}_1-\hat t^2{\breve C}_2-\hat t^3{\breve C}_3	\\
	-ih\partial_{u_2}-u_1-ih\breve E_0\partial_{u_1}+t{\breve E}_1-\hat t^2{\breve E}_2-\hat t^3{\breve E}_3	\\
	-ih\partial_t
\end{pmatrix}\,,\]
and
\begin{equation}\label{eq.C0D0}
C_0=\partial_r\mathscr{J}_1=\alpha^{\frac12}\mathcal{B}_3\,,\quad   E_0=\partial_s\mathscr{J}_1\,.
\end{equation}

\begin{notation}
We will use the following classical notation for the semiclassical Weyl quantization of a symbol $a=a(u,\upsilon)$. We let
\[a^W\psi(u)=\frac{1}{(2\pi h)^2}\int_{\mathbb{R}^4}e^{i(u-x)\cdot\upsilon/h}a\left(\frac{u+x}{2},\upsilon\right)\psi(x)\dd x\dd\upsilon\,.\]
\end{notation}

\begin{proposition}\label{prop.variable-effective}
Let $K>0$ and $\eta\in\left(0,\frac12\right)$. Let $\Xi$ be a smooth function equal to $0$ near $0$ and $1$ away from a compact neighborhood of $0$. There exists $h_0>0$ such that for all $h\in(0,h_0)$ and for all normalized eigenfunctions $\psi$ of $\mathscr{N}_h$ associated with an eigenvalue $\lambda$ such that $\lambda\leq Kh$, we have
\[\left[\Xi\left(\frac{u_1-\upsilon_2}{h^{\frac12-\eta}}\right)\right]^W\psi=\mathscr{O}(h^\infty)\,.\]
\end{proposition}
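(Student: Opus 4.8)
The plan is to prove the estimate by an Agmon-type argument on the symbol side, exploiting that the eigenvalue $\lambda\leq Kh$ forces the relevant microlocal mass to concentrate where the difference $u_1-\upsilon_2$ is small. First I would recall that, after the change of gauge leading to $\hat{\mathbf{A}}$ and the unitary transform $U$, the operator $\mathscr{N}_h$ acts as in \eqref{eq.LapptoNh} with $\mathscr{D}_h$ as written, and the Neumann eigenfunction $\psi$ associated with $\lambda\leq Kh$ satisfies the inherited localization estimates from Proposition \ref{prop.locx0} and Proposition \ref{prop.loct} (in the coordinates $(u,t)$), so that $\psi$ is concentrated near $(u,t)=(0,0)$ at the scales $|u|\lesssim h^{1/8}$, $t\lesssim h^{1/2}$, modulo $\mathscr{O}(h^\infty)$. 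In particular we may freely insert cutoffs in $(u,t)$ and work as if all coefficients $\breve C_j$, $\breve E_j$, $\breve m$, $\breve M$ were frozen and bounded.

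The key observation is the following positivity of the middle component of $\mathscr{D}_h$: the symbol associated with $\mathscr{D}_h$ (in the tangential variables $u=(u_1,u_2)$, with dual variable $\upsilon=(\upsilon_1,\upsilon_2)$) has a second entry whose leading part is $\upsilon_2-u_1$ (the lower-order pieces being $t\breve E_1$, $\hat t^2\breve E_2$, $\hat t^3\breve E_3$ of size $\mathscr{O}(t)=\mathscr{O}(h^{1/2})$, and $h\breve E_0\partial_{u_1}$ of size $\mathscr{O}(h^{1/2-\eta})$ in view of the $u$-localization). Hence the quadratic form obeys, for any $\psi$ in the form domain,
\[
\mathscr{O}(h)\|\psi\|^2 \geq \langle\mathscr{N}_h\psi,\psi\rangle \geq c\,\big\|\big[(-ih\partial_{u_2}-u_1)\psi\big]\big\|^2 - C h^{1-2\eta}\|\psi\|^2\,,
\]
using that $(\breve M)^{-1}\geq c>0$ and absorbing the lower-order terms. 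This shows that $\psi$ is essentially in the kernel of the symbol $\upsilon_2-u_1$, i.e. microlocally supported where $|u_1-\upsilon_2|\lesssim h^{1/2}$. To upgrade this to the sharp $\mathscr{O}(h^\infty)$ bound outside a region of width $h^{1/2-\eta}$, I would run an induction/bootstrap: from $\lambda\leq Kh$ and the elliptic (in the remaining directions) estimates for $\mathscr{N}_h-\lambda$, commute the operator $[\Xi((u_1-\upsilon_2)/h^{1/2-\eta})]^W$ against $\mathscr{N}_h$, using that on the support of the symbol of $\Xi$ the second factor $\upsilon_2-u_1$ is elliptic of size $\gtrsim h^{1/2-\eta}\gg h^{1/2}$, so that $(\upsilon_2-u_1)^2$ dominates $h$ there. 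Standard pseudodifferential calculus (with the relevant semiclassical parameter $h$ and the admissible scale $h^{1/2-\eta}$, using symbol classes associated with the metric $g=\frac{du^2+d\upsilon^2}{h^{1-2\eta}}$) then yields that applying such a cutoff to $\psi$ gains a factor $h^{\delta}$ for some $\delta>0$ at each step, and iterating gives the claimed $\mathscr{O}(h^\infty)$.

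\textbf{Main obstacle.}
The delicate point is the bookkeeping of the two competing scales: the eigenfunctions live at scale $t\sim h^{1/2}$ in the normal variable and $|u|\sim h^{1/8}$ tangentially, while the cutoff in the phase-space variable $u_1-\upsilon_2$ is at scale $h^{1/2-\eta}$, which is larger than $h^{1/2}$ but must still be treated as ``small'' for a consistent semiclassical calculus. One has to make sure that the lower-order terms in $\mathscr{D}_h$ — in particular $t\breve E_1$ and $h\breve E_0\partial_{u_1}$ — are genuinely negligible relative to $h^{1/2-\eta}$ on the support of the cutoff, which is where the hypothesis $\eta<\frac12$ and the precise localization scales enter; a careless estimate would only give localization at scale $h^{1/2}$, not $h^{1/2-\eta}$, and hence not allow the iteration to close. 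The cleanest route is probably to conjugate by a weight $e^{\Phi/h^{1/2-\eta}}$ with $\Phi=\Phi(u_1-\upsilon_2)$ (an Agmon weight in the phase-space variable, realized via an FBI/Weyl quantization) and show the conjugated form remains coercive, which simultaneously handles the exponential decay and the $\mathscr{O}(h^\infty)$ claim; I expect the verification of coercivity of this conjugated quadratic form, controlling all commutator errors, to be the technical heart of the argument.
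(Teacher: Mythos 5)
Your overall plan coincides with the paper's proof: insert the cutoff $\Xi_h^W$ into the eigenvalue equation via the identity $\langle\mathscr{N}_h\Xi_h^W\psi,\Xi_h^W\psi\rangle=\lambda\|\Xi_h^W\psi\|^2+\langle[\mathscr{N}_h,\Xi_h^W]\psi,\Xi_h^W\psi\rangle$, lower-bound the left side using the ellipticity $(\upsilon_2-u_1)^2\geq h^{1-2\eta}$ on $\mathrm{supp}\,\Xi_h$, bound the commutator (which gains at least $h^{1+\eta}$), and iterate over nested cutoffs to bootstrap to $\mathscr{O}(h^\infty)$. So the strategy is the right one.

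The gap is in the coercivity step, specifically in how you dispose of the cross term $h\breve E_0\partial_{u_1}$. You assert it is "of size $\mathscr{O}(h^{1/2-\eta})$ in view of the $u$-localization," but the localization from Proposition~\ref{prop.locx0} is spatial (roughly $|u|\lesssim h^{1/8}$) and gives no a priori control on $\|h\partial_{u_1}\psi\|$; the term is not small in any operator norm. What actually closes the estimate is an absorption argument: after Young's inequality one must compare $\frac12\|h\breve C_0 D_{u_1}\varphi\|^2$ against $2\|h\breve E_0 D_{u_1}\varphi\|^2$, and the difference is nonnegative because $|\breve E_0|$ is \emph{globally} small relative to $|\breve C_0|$. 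This smallness is not a consequence of localization but a structural fact built into the construction: $C_0=\partial_r\mathscr{J}_1=\alpha^{1/2}\mathcal{B}_3$ is bounded away from zero (since $\theta(x_0)>0$), while $E_0=\partial_s\mathscr{J}_1$ vanishes at the origin, and the extension $Z$ of Section~\ref{sec.ext} was designed precisely so that $\|\breve E_0\|_\infty$ can be made as small as desired by shrinking $\epsilon_0$. Without this absorption the lower bound~\eqref{eq.Q0h} fails, and your iteration does not get off the ground. (Your closing suggestion of a phase-space Agmon weight is not the route taken in the paper, though it is a plausible alternative; it would still rely on the same structural smallness of $\breve E_0$ to control the conjugated cross term.)
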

\begin{proof}
To simplify the notation, we denote by $\Xi_h=\Xi\left( \frac{u_1-\upsilon_2}{h^{\frac{1}{2}-\eta}} \right)$. Let $\psi$ be a normalized eigenfunction of $\mathscr{N}_h$ associated with an eigenvalue $\lambda$ such that $\lambda\leq Kh$. The eigenvalue equation gives us
\begin{equation}
\langle \mathscr{N}_h \Xi_h^W\psi,\Xi_h^W\psi \rangle=\lambda \| \Xi_h^W\psi \|^2+\langle \left[ \mathscr{N}_h,\Xi_h^W \right]\psi,\Xi_h^W\psi \rangle,
    \label{E1}
\end{equation}
where $\langle\cdot,\cdot\rangle$ is the scalar product in $L^2\left( \mathbb{R}_+^3,\breve m | \mathrm{Jac}\mathscr{J}^{-1} |\dd u \dd t \right)$.

According to the localization at the scale $h^{\frac{1}{2}}$ with respect to $t$, we can insert a cutoff function supported in $\{t\leq h^{\frac{1-\eta}{2}} \}$ and we obtain, for $j=2,3$,
\begin{equation}
\| t^j\Xi_h^W\psi  \|\leq Ch^{1-\eta}\| \Xi_h^W\psi  \|+\mathscr{O}(h^{\infty})\| \psi \|\,.
 \label{LOC}
\end{equation}
By means of the Young inequality and rough quadratic form estimates, this yields, for some $c, C>0$,
\begin{equation}
    \langle \mathscr{N}_h \Xi_h^W\psi,\Xi_h^W\psi \rangle\geq cQ_h^0(\Xi_h^W\psi)-Ch^{1-\eta}\| \Xi_h^W\psi\|^2+\mathscr{O}(h^{\infty})\|\psi\|^2\,,
    \label{LOC1}
\end{equation}
where 
\[
Q_h^0(\varphi)=\|h\partial_t\varphi\|^2+\left\| ( h\breve C _0D_{u_1}-t\breve C _1)\varphi \right\|^2+\left\| (hD_{u_2}-u_1+h\breve E_0 D_{u_1}+t\breve E _1)\varphi \right\|^2\,.
\]
Then, using again the Young inequality, we find that
\[Q_h^0(\varphi)\geq \|h\partial_t\varphi\|^2+\frac12\left\|h\breve C _0D_{u_1}\varphi \right\|^2+\frac12\left\| (hD_{u_2}-u_1)\varphi \right\|^2-2\|h\breve E _0 D_{u_1}\varphi\|^2-C\|t\varphi\|^2\,.\]
Notice that there exists $c>0$ such that 
\[
| \breve C _0  | \geq c\,,\quad  | \breve E_0 | \leq \frac c4 \,,
\]
where we recall \eqref{eq.C0D0} and Lemma \ref{lem.J}. Note also that $C_0$ is globally positive and that $E_0$ is as small as we want since it vanishes at $(0,0,0)$, after the extension procedure in Section \ref{sec.ext}. This shows that, for some $c_0>0$,
\begin{equation}\label{eq.Q0h}
Q_h^0(\varphi)\geq \|h\partial_t\varphi\|^2+c_0\left\|hD_{u_1}\varphi \right\|^2+\frac12\left\| (hD_{u_2}-u_1)\varphi \right\|^2-C\|t\varphi\|^2\,.
\end{equation}
On the support of $\Xi_h$ we have $(\upsilon_2-u_1)^2\geq h^{1-2\eta}$. Thus \eqref{LOC}, \eqref{LOC1}, \eqref{eq.Q0h}, and again the localization in $t$, yield
\begin{equation}\label{E2}
\langle \mathscr{N}_h \Xi_h^W\psi,\Xi_h^W\psi \rangle\geq \frac{\tilde{c}}{2}h^{1-2\eta}\| \Xi_h^W\psi\|^2+\mathscr{O}(h^{\infty})\| \psi \|^2\,.
\end{equation}
By using classical results of composition of pseudo-differential operators, we have
\begin{equation}
\langle \left[ \mathscr{N}_h,\Xi_h^W \right]\psi,\Xi_h^W\psi \rangle \leq Ch^{1+\eta}\| \underline\Xi_h^W \psi\|^2+\mathscr{O}(h^\infty)\|\psi\|^2\,,
    \label{E3}
\end{equation}
where $\underline \Xi$ has a support slightly larger than that of $\Xi_h$. Here we used the energy estimate
$\|\mathscr{D}_h\Xi^W_h\psi\|=\mathscr{O}(h^{1/2})\|\underline{\Xi}_h^W\psi\|+\mathscr{O}(h^\infty)\|\psi\|$, which follows from rough estimates of \eqref{E1}.

Thus, by combining \eqref{E1}, \eqref{E2}, and \eqref{E3} with the fact that $\lambda\leq Kh$, we obtain
\[
\| \Xi_h^W\psi\|^2\leq Mh^{\eta}\| \underline \Xi_h^W\psi \|^2+\mathscr{O}(h^{\infty})\| \psi \|^2\,.
\]
Finally, by an induction argument on the size of the support of $\Xi$, we get
\[\| \Xi_h^W\psi\|=\mathscr{O}(h^{\infty})\| \psi \|\,.\]
\end{proof}
Let us consider the partial semiclassical Fourier transform $\mathscr{F}_2$\footnote{which is the metaplectic transform associated with the linear symplectic application $(u_2,\upsilon_2)\mapsto (\upsilon_2,-u_2)$, see, for instance, \cite[Section 3.4]{Martinez}.} with respect to $u_2$ and the translation/dilation $T : u_1\mapsto (u_1-\upsilon_2)h^{-\frac12}=z$. With a slight abuse of notation, we identify $T$ with $\varphi\mapsto\varphi\circ T$. Letting $V=\mathscr{F}_2^{-1}T$, we have
\[V^*(-ih\partial_{u_2}-u_1)V=-h^{\frac12}z\,,\]
and, with the dilation $W : t\mapsto h^{-\frac12}t$,
\[W^*V^{*}\mathscr{D}_hVW=\hbar\mathscr{D}^\sharp_\hbar\,,\quad \hbar=h^{\frac12}\,,\]
with 
\[\mathscr{D}^\sharp_\hbar=\begin{pmatrix}
	-iC_0^\sharp\partial_{z}-tC_1^\sharp-\hbar t^2\chi(h^\eta t)^2C_2^\sharp-\hbar^2t^3\chi(h^\eta t)^3C_3^\sharp	\\
	-z-iE_0^\sharp\partial_{z}+tE_1^\sharp-\hbar t^2\chi(h^\eta t)^2{E}_2^\sharp-\hbar^2t^3\chi(h^\eta t)^3{E}_3^\sharp	\\
	-i\partial_t
\end{pmatrix}^W\]
where the coefficients of the conjugated operator $\mathscr{D}^\sharp_\hbar$ are now given by $P^\sharp=\breve P(\upsilon_2+\hbar z,-u_2)$. Here the Weyl quantization can be considered only in the variables $(u_2,\upsilon_2)$ since $z$ is now a "space variable".
We let
\[\mathscr{N}^\sharp_\hbar=[{ m_\hbar}^{-1}]^\sharp\mathscr{D}^\sharp_\hbar\cdot[{ m_\hbar}({ M_\hbar})^{-1}]^\sharp\mathscr{D}^\sharp_\hbar\,,\]
where $m_\hbar(\cdot,t)=m(\cdot,\hbar t)$ and $M_\hbar(\cdot,t)=M(\cdot,\hbar t)$. Note that $\mathscr{N}_h$ and $h\mathscr{N}^\sharp_\hbar$ are unitarily equivalent since
\begin{equation}\label{eq.Ndiese}
W^*V^{*}\mathscr{N}_hVW=h\mathscr{N}^\sharp_\hbar\,.
\end{equation}
After all these elementary transforms, Proposition \ref{prop.variable-effective} can be reformulated as follows.

\begin{proposition}\label{prop.variable-effective2}
	Let $K>0$ and $\eta\in\left(0,\frac12\right)$. Let $\Xi$ be a smooth function equal to $0$ near $0$ and $1$ away from a compact neighborhood of $0$. There exists $h_0>0$ such that for all $h\in(0,h_0)$ and for all normalized eigenfunctions $\psi$ of $\mathscr{N}^\sharp_\hbar$ associated with an eigenvalue $\lambda$ such that $\lambda\leq K$, we have
	\[\Xi\left(h^{\eta}z\right)\psi=\mathscr{O}(h^\infty)\,.\]
\end{proposition}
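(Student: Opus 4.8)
The plan is to transfer Proposition~\ref{prop.variable-effective} through the explicit unitary transforms $V=\mathscr{F}_2^{-1}T$ and $W$, checking that the localization statement in the symbol variable $u_1-\upsilon_2$ is converted into a localization in the space variable $z$. First I would recall that, by \eqref{eq.Ndiese}, if $\psi$ is a normalized eigenfunction of $\mathscr{N}^\sharp_\hbar$ associated with the eigenvalue $\lambda\leq K$, then $\varphi:=VW\psi$ is a normalized eigenfunction of $\mathscr{N}_h$ associated with the eigenvalue $h\lambda\leq Kh$ (here I use that $VW$ is unitary, hence preserves norms, and recall the normalization convention used in the $L^2$-space with weight $\breve m|\mathrm{Jac}\mathscr{J}^{-1}|$, which is carried along by $U$ and is compatible with the partial Fourier transform and the dilations). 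Thus Proposition~\ref{prop.variable-effective} applies to $\varphi$ and gives, for every admissible cutoff $\Xi$,
\[
\left[\Xi\left(\frac{u_1-\upsilon_2}{h^{\frac12-\eta}}\right)\right]^W\varphi=\mathscr{O}(h^\infty)\,.
\]

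Next I would identify the operator $\left[\Xi\!\left(\frac{u_1-\upsilon_2}{h^{1/2-\eta}}\right)\right]^W$ after conjugation by $V^*$ and $W^*$. The symbol $\Xi\!\left(\frac{u_1-\upsilon_2}{h^{1/2-\eta}}\right)$ depends only on $u_1$ and on $\upsilon_2$, and it is \emph{linear-affine} in these variables inside the argument of $\Xi$; the point is that $\mathscr{F}_2$ is the metaplectic transform associated with $(u_2,\upsilon_2)\mapsto(\upsilon_2,-u_2)$ (footnote in the excerpt), so conjugating a Weyl quantization by $\mathscr{F}_2$ amounts to the corresponding linear substitution in the symbol — concretely $\upsilon_2$ becomes $-u_2$ in the new variables, while $u_1,\upsilon_1$ are untouched. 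The translation/dilation $T:u_1\mapsto(u_1-\upsilon_2)h^{-1/2}=z$, identified with $\varphi\mapsto\varphi\circ T$, turns multiplication by $u_1-\upsilon_2$ into multiplication by $h^{1/2}z$ (this is exactly the computation $V^*(-ih\partial_{u_2}-u_1)V=-h^{1/2}z$ used just above in the excerpt, read at the level of the relevant affine coordinate). Since $\Xi\!\left(\frac{u_1-\upsilon_2}{h^{1/2-\eta}}\right)$ is a function of $u_1-\upsilon_2$ alone, its quantization is just the multiplication operator by $\Xi\!\left(\frac{u_1-\upsilon_2}{h^{1/2-\eta}}\right)$, and after conjugation by $V$ this becomes the multiplication operator by $\Xi\!\left(\frac{h^{1/2}z}{h^{1/2-\eta}}\right)=\Xi(h^{\eta}z)$; the dilation $W$ in $t$ commutes with it since it does not involve $t$. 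Therefore
\[
W^*V^*\left[\Xi\left(\frac{u_1-\upsilon_2}{h^{\frac12-\eta}}\right)\right]^W VW=\Xi(h^{\eta}z)\,,
\]
as a multiplication operator.

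Combining the two steps, $\Xi(h^\eta z)\psi=W^*V^*\big[\Xi(\tfrac{u_1-\upsilon_2}{h^{1/2-\eta}})\big]^W\varphi$, and since $W^*V^*$ is unitary the $\mathscr{O}(h^\infty)$ bound is preserved: $\|\Xi(h^\eta z)\psi\|=\mathscr{O}(h^\infty)\|\psi\|=\mathscr{O}(h^\infty)$. This is precisely the claimed estimate. The only point requiring care — and what I expect to be the main (mild) obstacle — is the bookkeeping of the unitary transforms and their associated weights: one must be sure that $U$, $\mathscr{F}_2$, $T$ (understood as $\varphi\mapsto\varphi\circ T$ together with the $L^2$-normalizing Jacobian factor $h^{-1/4}$ it carries), and $W$ compose into a genuine unitary between the correct weighted $L^2$-spaces, so that "normalized eigenfunction" and "$\mathscr{O}(h^\infty)$" are transported without hidden powers of $h$; and that the eigenvalue threshold is correctly rescaled ($\lambda\le K$ for $\mathscr{N}^\sharp_\hbar$ corresponds to $h\lambda\le Kh$ for $\mathscr{N}_h$, so Proposition~\ref{prop.variable-effective} applies with the same $K$). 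Since all these transforms are explicit and this computation has essentially already been performed in the lines preceding the statement, the proof is short; one may simply write "This is a reformulation of Proposition~\ref{prop.variable-effective} through the unitary transform $VW$, using that the symbol $\Xi(\tfrac{u_1-\upsilon_2}{h^{1/2-\eta}})$ is turned into the multiplication operator by $\Xi(h^\eta z)$ by the conjugation $W^*V^*\,\cdot\,VW$."
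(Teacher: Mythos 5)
Your proposal is correct and is essentially the paper's own argument: the paper states Proposition \ref{prop.variable-effective2} as a direct reformulation of Proposition \ref{prop.variable-effective} after conjugation by the explicit unitaries $V=\mathscr{F}_2^{-1}T$ and $W$, and gives no further proof. One sentence in your write-up is imprecise and worth correcting: before the Fourier transform $\mathscr{F}_2$, the Weyl quantization of $\Xi\bigl(\tfrac{u_1-\upsilon_2}{h^{1/2-\eta}}\bigr)$ is \emph{not} a multiplication operator, since $\upsilon_2$ is a momentum variable; rather, because the symbol depends only on $u_1-\upsilon_2$ and the operators $u_1$ and $-ih\partial_{u_2}$ commute, the Weyl quantization coincides with the spectral functional calculus $\Xi\bigl(\tfrac{u_1+ih\partial_{u_2}}{h^{1/2-\eta}}\bigr)$, and since unitary conjugation commutes with functional calculus of a selfadjoint operator, the identity $V^*(-ih\partial_{u_2}-u_1)V=-h^{1/2}z$ gives $W^*V^*\Xi\bigl(\tfrac{u_1-\upsilon_2}{h^{1/2-\eta}}\bigr)^W VW=\Xi(h^{\eta}z)$, which \emph{is} multiplication — so your final conclusion stands.
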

\begin{remark}\label{rem.roughfrequency}
As a consequence of the Agmon estimates and working in the coordinates $(u_1, u_2, t)$, we notice that the eigenfunctions are also roughly localized in "frequency" in the sense that, for all $(\alpha,\beta,\gamma)\in\mathbb{N}^3$ , and all $\eta\in\left(0,\frac12\right)$, there exist $C, h_0>0$ such that, for all $h\in(0,h_0)$,
	\[\| t^\alpha z^\beta D^\gamma_z\psi\|+\|t^\alpha z^\beta  D^\gamma_t\psi\|\leq C h^{-\eta (\alpha+\beta+\gamma)}\|\psi\|\,.\]
\end{remark}

\section{A pseudodifferential operator with operator symbol}\label{sec.4}

Proposition \ref{prop.variable-effective2} invites us to insert cutoff functions in the coefficients of the operator $\mathscr{N}^\sharp_\hbar$. That is why we consider
\begin{equation}\label{eq.Nbemol}
\mathscr{N}^{\flat}_\hbar=\left([{ m_\hbar}^{-1}]^\flat\right)^W\mathscr{D}^\flat_\hbar\cdot\left([{ m_\hbar}({ M_\hbar})^{-1}]^\flat\right)^W\mathscr{D}^\flat_\hbar\,,
\end{equation}
where
\begin{equation}\label{eq.Dbemol}
\mathscr{D}^\flat_\hbar=\begin{pmatrix}
	-iC_0^\flat\partial_{z}-tC_1^\flat-\hbar t^2\chi(h^\eta t)^2C_2^\flat-\hbar^2t^3\chi(h^\eta t)^3C_3^\flat	\\
	-z-iE_0^\flat\partial_{z}+tE_1^\flat-\hbar t^2\chi(h^\eta t)^2{E}_2^\flat-\hbar^2t^3\chi(h^\eta t)^3{E}_3^\flat	\\
	-i\partial_t
\end{pmatrix}^W\,,
\end{equation}
with $P^\flat=\breve P(\upsilon_2+\hbar\chi_\eta(z) z,-u_2)$, where $\chi_\eta(z)=\chi_0(h^{\eta}z)$, the function $\chi_0$ being smooth, with a compact support, and equal to $1$ on a neighborhood of the support of $1-\Xi$. 

\subsection{The symbol and its properties}
Expanding the operator $\mathscr{N}_\hbar^\flat$ with respect to $\hbar$ (say first at a formal level) suggests to consider the following selfadjoint operator, depending on $(u_2,\upsilon_2)$, acting as
\begin{multline*}
	n_0(u_2,\upsilon_2)\\
	=(-i\breve C_0(\upsilon_2,-u_2)\partial_{z}-t\breve C_1(\upsilon_2,-u_2))^2+\alpha^{-1}(\upsilon_2,-u_2)(-z-i\breve E_0(\upsilon_2,-u_2)\partial_z+t\breve E_1(\upsilon_2,-u_2))^2-\partial^2_{t}\,,
\end{multline*}
with the domain
\[\mathrm{Dom}(n_0)=\{\psi\in L^2(\mathbb{R}^2_+) : n_0(u_2,\upsilon_2)\psi\in L^2(\mathbb{R}^2_+)\,, \partial_t\psi(z,0)=0\}\,,\]
and where we recall that $C_1$ and $E_1$ are given in \eqref{eq.hatA}.  The domain of $n_0(u_2,\upsilon_2)$ depends on $(u_2,\upsilon_2)$. However, we can check that it is unitarily equivalent to a selfadjoint operator with domain independent of $(u_2,\upsilon_2)$, see the proof of Proposition \ref{prop.psymb} below.
In the following, we will use class of operator symbols of the form
\[S(\mathbb{R}^2,\mathcal{L}(\mathscr{A}_1,\mathscr{A}_2))=\{a\in\mathscr{C}^\infty(\mathbb{R}^2,\mathcal{L}(\mathscr{A}_1,\mathscr{A}_2)) : \forall \gamma\in\mathbb{N}^2\,, \exists C_\gamma>0 : \|\partial^\gamma a\|_{\mathcal{L}(\mathscr{A}_1,\mathscr{A}_2)}\leq C_\gamma\}\,,\]
where $\mathscr{A}_1$ and $\mathscr{A}_2$ are (fixed) Hilbert spaces. We also introduce
\[\mathscr{B}_k=\{\psi\in L^2(\mathbb{R}^2_+) :  \forall \alpha\in\mathbb{N}^k\,,|\alpha|\leq k\Rightarrow(\langle t\rangle^{k}+\langle z\rangle^{k})\partial^\alpha\psi \in L^2(\mathbb{R}^2_+)\}\,,\]
and the class of symbols
\[S(\mathbb{R}^2,N)=\bigcap_{k\geq N}S(\mathbb{R}^2,\mathcal{L}(\mathscr{B}_{k},\mathscr{B}_{k-N}))\,.\]
and we notice that $n_0\in S(\mathbb{R}^2,2)$.  
\begin{remark}\label{rem.SMN}
Note  that these classes of symbols are not algebras. However, the classical Moyal product of symbols in $S(\mathbb{R}^2,N)$ and $S(\mathbb{R}^2,M)$ is well-defined and belongs to $S(\mathbb{R}^2,N+M)$, see \cite[Theorem 2. 1. 12]{Keraval}.
\end{remark}

In fact, for $N\geq 2$, by using a classical trace theorem, we may also define
\[\mathscr{B}^{\mathrm{Neu}}_N=\{\psi\in \mathscr{B}_N : \partial_t\psi(z,0)=0 \}(\subset \mathrm{Dom}\, n_0)\,,\]
and the associated class $S^{\mathrm{Neu}}(\mathbb{R}^2,N)$.
We can also write  $n_0\in S^{\mathrm{Neu}}(\mathbb{R}^2,2)$ to remember that the domain of $n_0$ is equipped with the Neumann condition.

By expanding $\mathscr{N}_\hbar^\flat$ in powers of $\hbar$ and by using a composition theorem for pseudodifferential operators, we get the following. 
\begin{proposition}\label{prop.Nhbarbemol}
The operator $\mathscr{N}_\hbar^\flat$ is an $h$-pseudodifferential operator with symbol in the class $S^{\mathrm{Neu}}(\mathbb{R}^2,2)$.
Moreover, we can write the expansion
\begin{equation}\label{eq.expNbemol}
\mathscr{N}_\hbar^\flat=n_0^W+\hbar n_1^W+\hbar^2n_2^W+\hbar^3r^W_\hbar\,,
\end{equation}
with $n_1$, $n_2$ and $r_\hbar$ in the class $S^{\mathrm{Neu}}(\mathbb{R}^2,8)$. 
\end{proposition}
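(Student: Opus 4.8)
The plan is to realize $\mathscr{N}_\hbar^\flat$ as a composition of elementary $h$-pseudodifferential operators in $(u_2,\upsilon_2)$, with operator-valued symbols acting on $L^2(\mathbb{R}^2_+)$ in the variables $(z,t)$, and then to Taylor-expand everything in powers of $\hbar$. \emph{First}, I would check that each building block of $\mathscr{N}_\hbar^\flat$ is an $h$-pseudodifferential operator with symbol in an appropriate class $S(\mathbb{R}^2,N)$. Since $\mathscr{J}$ is a global diffeomorphism (Lemma~\ref{lem.J}) and all coefficients were extended to be constant outside a compact set in Section~\ref{sec.ext}, each $\breve P$ is smooth with bounded derivatives of all orders; moreover $\hbar\chi_\eta(z)z$ is bounded by $Ch^{\frac12-\eta}\to 0$, so $P^\flat=\breve P(\upsilon_2+\hbar\chi_\eta(z)z,-u_2)$ remains where $\breve P$ is controlled and defines a bounded multiplication operator in $(z,t)$, smooth and bounded in $(u_2,\upsilon_2)$, hence of order $0$. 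The elementary facts for the bookkeeping are: multiplication by $z$ loses one order of decay, so $z\in S(\mathbb{R}^2,1)$ and, since $|\chi_\eta(z)z|\leq|z|$ uniformly in $h$, also $\chi_\eta(z)z\in S(\mathbb{R}^2,1)$; likewise $t^j\chi(h^\eta t)^j\in S(\mathbb{R}^2,j)$ uniformly in $h$ (the apparent blow-up of $t$-derivatives hitting the cutoff being absorbed because those terms live where $t\sim h^{-\eta}$); and $\partial_z,\partial_t\in S(\mathbb{R}^2,1)$, with $\partial_t^2$ understood on the Neumann space $\mathscr{B}^{\mathrm{Neu}}_k$. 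Hence the three entries of $\mathscr{D}^\flat_\hbar$ lie in $S(\mathbb{R}^2,1)$, the factors $([m_\hbar^{-1}]^\flat)^W$ and $([m_\hbar(M_\hbar)^{-1}]^\flat)^W$ lie in $S(\mathbb{R}^2,0)$, and $m_\hbar^{-1}$ makes sense because $m_0=\sqrt\alpha>0$ and $m_\hbar=m_0+\mathscr{O}(h^{\frac12-\eta})$.

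\emph{Second}, I would invoke the composition theorem for operator-valued pseudodifferential symbols recalled in Remark~\ref{rem.SMN} (see \cite[Theorem 2.1.12]{Keraval}): entrywise, $\mathscr{N}_\hbar^\flat$ is a sum of compositions of four $h$-pseudodifferential operators of orders $0,1,0,1$, so it is an $h$-pseudodifferential operator with symbol in $S(\mathbb{R}^2,2)$; this symbol takes values in operators carrying the Neumann domain, because the only operator of second order in $\partial_t$ that appears is $-\partial_t^2$, coming from the cutoff-free, $\hbar$-independent third component of $\mathscr{D}^\flat_\hbar$, whose boundary form is exactly the Neumann condition --- as for $n_0$ --- so the symbol lies in $S^{\mathrm{Neu}}(\mathbb{R}^2,2)$. \emph{Third}, for the expansion I would Taylor-expand each coefficient at $\upsilon_2$, for instance
\[P^\flat=\breve P(\upsilon_2,-u_2)+\hbar\,\chi_\eta(z)z\,(\partial_1\breve P)(\upsilon_2,-u_2)+\tfrac{\hbar^2}{2}(\chi_\eta(z)z)^2(\partial_1^2\breve P)(\upsilon_2,-u_2)+\hbar^3\tilde r_\hbar\,,\quad \tilde r_\hbar\in S(\mathbb{R}^2,3)\,,\]
and similarly to a suitable higher finite order; I would expand $m_\hbar$, $M_\hbar$ and the substitution $\upsilon_2\mapsto\upsilon_2+\hbar\chi_\eta(z)z$ in $\hbar$, expand the Moyal products of the four factors to a suitable finite order in $h$, and collect powers of $\hbar$. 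The $\hbar^0$-term is exactly $n_0^W$: one uses here that at $\hbar=0$ the coefficient $\breve\alpha(\upsilon_2,-u_2)$ no longer depends on $(z,t)$, so the conjugation collapses the cross terms and leaves precisely $(-i\breve C_0\partial_z-t\breve C_1)^2+\alpha^{-1}(-z-i\breve E_0\partial_z+t\breve E_1)^2-\partial_t^2$, i.e.\ $n_0$. The $\hbar^1$- and $\hbar^2$-terms define $n_1$ and $n_2$, and everything of higher order (truncated Taylor/Moyal remainders together with the collected terms of degree $\geq 3$ in $\hbar$) defines $\hbar^3 r_\hbar$; each of $n_1,n_2,r_\hbar$ is a finite sum of products of symbols whose orders add up to at most $8$ --- a deliberately crude bound, obtained by counting a few derivatives/monomials per factor and using that the Moyal expansion does not raise the order in the $\mathscr{B}_k$-scale --- with all estimates uniform in $h$, so $n_1,n_2,r_\hbar\in S^{\mathrm{Neu}}(\mathbb{R}^2,8)$.

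The main obstacle is the uniform-in-$\hbar$ bookkeeping: one must track the order $N$ in the scale $(\mathscr{B}_k)$ through the Taylor and Moyal expansions while keeping all constants independent of $h$, despite the $h$-dependent cutoffs $\chi(h^\eta t)$ and $\chi_\eta(z)=\chi_0(h^\eta z)$. This is ultimately harmless: every $z$- or $t$-derivative hitting a cutoff produces an extra $h^\eta$ (which only helps), and $z\chi_\eta(z)$, $t^j\chi(h^\eta t)^j$ are dominated pointwise, uniformly in $h$, by $\langle z\rangle$, $\langle t\rangle^j$, so for the operator-symbol calculus they behave exactly like the honest monomials $z$, $t^j$ --- in particular $\hbar\chi_\eta(z)z$ is genuinely $\hbar$ times a symbol of order one, and the $\hbar$-expansions above are legitimate. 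A secondary, routine point is that the Neumann boundary condition is stable under Moyal composition, since composing on either side by a symbol of order zero in $\partial_t$ does not change the domain; and one should note that only the principal symbol $n_0\in S^{\mathrm{Neu}}(\mathbb{R}^2,2)$ needs to be pinned down precisely (it is, by the computation above), the exact common order of $n_1,n_2,r_\hbar$ being irrelevant for the subsequent Grushin reduction.
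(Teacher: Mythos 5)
Your proposal is correct and takes essentially the same route as the paper: identify $\mathscr{D}^\flat_\hbar$ as a first-order operator-valued symbol in $S(\mathbb{R}^2,1)$, the weight factors as symbols in $S(\mathbb{R}^2,0)$, apply the composition theorem of Remark~\ref{rem.SMN} to get a symbol in $S^{\mathrm{Neu}}(\mathbb{R}^2,2)$, then Taylor-expand the $\flat$-substituted coefficients and the Moyal products in $\hbar$ and bound the remainder's order crudely by $8$. The paper's proof is a short sketch; you simply fill in the bookkeeping (uniformity in the cutoffs, why the $\mathscr{B}_k$-scale is respected, identification of the $\hbar^0$-term with $n_0$) that the paper leaves implicit.
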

\begin{proof}
Let us recall that $\mathscr{N}_\hbar^\flat$ is given in \eqref{eq.Nbemol}. Let us notice that the operator $\mathscr{D}^\flat_\hbar$, defined in \eqref{eq.Dbemol}, is indeed a pseudodifferential operator with operator-valued symbol. With respect to the variables $z$ and $t$, it is a differential operator of order $1$ whose symbol is
\begin{equation}\label{eq.symbDbemol}
\begin{pmatrix}
	-iC_0^\flat\partial_{z}-tC_1^\flat-\hbar t^2\chi(h^\eta t)^2C_2^\flat-\hbar^2t^3\chi(h^\eta t)^3C_3^\flat	\\
	-z-iE_0^\flat\partial_{z}+tE_1^\flat-\hbar t^2\chi(h^\eta t)^2{E}_2^\flat-\hbar^2t^3\chi(h^\eta t)^3{E}_3^\flat	\\
	-i\partial_t
\end{pmatrix}
\end{equation}
and belongs to $S(\mathbb{R}^2,1)$. The functions/symbols $[{ m_\hbar}^{-1}]^\flat$ and $[{ m_\hbar}({ M_\hbar})^{-1}]^\flat$ belong to $S(\mathbb{R}^2,0)$. Combining these considerations with \eqref{eq.Nbemol}, it remains to apply the composition theorem for pseudodifferential operators with operator symbols, see Remark \ref{rem.SMN}.

To get \eqref{eq.expNbemol}, it is sufficient to use the Taylor expansions in $\hbar$ of the symbol \eqref{eq.symbDbemol}, $[{ m_\hbar}^{-1}]^\flat$, and $[{ m_\hbar}({ M_\hbar})^{-1}]^\flat$, and to apply again the composition theorem (the worst remainders being roughly of order $8$ in $(z,t)$).
\end{proof}
\begin{remark}
We will see that the accurate description of $n_1$ and $n_2$ in \eqref{eq.expNbemol} are not necessary to prove our main theorem. The use of the more restrictive class $S^{\mathrm{Neu}}(\mathbb{R}^2,8)$ allows to deal with the uniformity in the semiclassical expansions in $\hbar$.
\end{remark}
Let us describe the groundstate energy of the principal symbol $n_0$. From now on, we lighten the notation by setting $(u_2,\upsilon_2)=(u,\upsilon)$. 

\begin{proposition}\label{prop.psymb}
For all $(u,\upsilon)\in\mathbb{R}^2$, the bottom of the spectrum of $n_0$ belongs to the discrete spectrum and it is a simple eigenvalue that equals $\breve\beta(\upsilon,-u)$.	The corresponding normalized eigenfunction $\mathfrak{f}_{u,\upsilon}$ belongs to the Schwartz class and depends on $(u,\upsilon)$ smoothly.

Moreover, there exists $c>0$ such that, by possibly choosing $\epsilon_0$ smaller in Lemma \ref{lem.epsilon0}, we have, for all $(u,\upsilon)\in\mathbb{R}^2$,
\[\inf\mathrm{sp}(n_0(u,\upsilon)|_{\mathfrak{f}^\perp_{u,\upsilon}})\geq \beta_{\min}+c\geq \breve\beta(u,\upsilon)\,.\]

\end{proposition}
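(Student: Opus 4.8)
The plan is to recognize $n_0(u,\upsilon)$ as a (rescaled) model magnetic operator on the half-plane and to transfer the known spectral facts about the Lu-Pan operator. First I would simplify $n_0(u,\upsilon)$ by a change of variable that kills the term $-i\breve E_0\partial_z$: since $\breve C_0$ is globally positive (see \eqref{eq.C0D0} and Lemma \ref{lem.J}) and $\breve E_0$ is as small as we want after the extension of Section \ref{sec.ext}, a linear change of the $z$-variable of the form $z\mapsto \lambda(u,\upsilon) z$, composed with a gauge transform $e^{i\mu(u,\upsilon)tz}$ (and possibly a translation in $z$ and a rescaling in $t$), turns the first-order vector field $\mathscr{D}$ with constant-in-$(z,t)$ coefficients into the standard form of a constant magnetic field on $\mathbb{R}^2_+$. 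The key invariant quantities to track through these conjugations are the (constant) components of the associated magnetic field; a direct computation (analogous to the discussion around \eqref{eq.newB} and Remark \ref{rem.B3}) shows that, for the symbol at the point corresponding to a boundary point $x$, these are $\|\mathbf{B}(x)\|$ and the angle $\theta(x)$ between $\mathbf{B}$ and the boundary. Hence $n_0(u,\upsilon)$ is unitarily equivalent to $\|\mathbf{B}\|\,\mathcal{L}_{\theta}$, the Lu-Pan operator \eqref{eq.LuPan}, with the identification $(u,\upsilon)\mapsto$ boundary point so that $\theta = \theta(\upsilon,-u)$ and $\|\mathbf{B}\| = \|\mathbf{B}(\upsilon,-u)\|$. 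The conjugations are explicit and smooth in $(u,\upsilon)$, and preserve the Neumann condition, so the unitary equivalence is compatible with the symbol classes $S^{\mathrm{Neu}}(\mathbb{R}^2,N)$; the domain-dependence issue mentioned before the proposition is handled here, since after conjugation the domain is the fixed Neumann domain of $\mathcal{L}_\theta$.

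Once this unitary equivalence is in place, the spectral statement is immediate from the recalled properties of $\mathbf{e}(\theta)$: for $\theta(x_0)\in(0,\tfrac\pi2)$, hence (by continuity of $\theta$ and shrinking $\epsilon_0$) for all $(u,\upsilon)$, we have $\theta(\upsilon,-u)\in(0,\tfrac\pi2)$, and in that regime the groundstate energy of $\mathcal{L}_\theta$ is a \emph{simple, discrete} eigenvalue equal to $\mathbf{e}(\theta)$, with Schwartz-class eigenfunction (see \cite[Section 0.1.5.4]{Raymond17}). Multiplying by $\|\mathbf{B}\|$ gives $\inf\mathrm{sp}\,n_0(u,\upsilon)=\breve\beta(\upsilon,-u)$, simple and isolated. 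Smooth dependence of the normalized eigenfunction $\mathfrak{f}_{u,\upsilon}$ on $(u,\upsilon)$ follows from analytic/smooth perturbation theory applied to the simple isolated eigenvalue, combined with the smooth dependence of the conjugating unitaries; the Schwartz regularity is inherited from that of the Lu-Pan groundstate (uniformly on compacts, and the parameters range in sets where $\theta$ stays in a compact subinterval of $(0,\tfrac\pi2)$).

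For the spectral gap estimate, I would argue as follows. Since $\mathbf{e}$ is continuous and increasing on $[0,\tfrac\pi2]$ with $\mathbf{e}(0)=\Theta_0$, we have, for $\theta$ in a neighborhood of $\theta(x_0)$, that the \emph{second} eigenvalue (or the bottom of the rest of the spectrum) of $\mathcal{L}_\theta$ is bounded below by some $\nu(\theta)>\mathbf{e}(\theta)$. One concrete way: the essential spectrum of $\mathcal{L}_\theta$ starts at $\min(1,\ldots)$ well above $\mathbf{e}(\theta)$, and the simplicity of $\mathbf{e}(\theta)$ on the compact parameter range yields a uniform gap $\inf\mathrm{sp}(\mathcal{L}_\theta|_{\mathfrak{f}^\perp})\geq \mathbf{e}(\theta)+c'$ for some $c'>0$. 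Multiplying by $\|\mathbf{B}\|\geq b_{\min}$ and using Assumption \ref{hyp.main} (namely $\beta_{\min}<b_{\min}$), together with $\breve\beta(u,\upsilon)=\|\mathbf{B}\|\mathbf{e}(\theta)$ and the fact that $\breve\beta$ achieves its minimum $\beta_{\min}$ at the origin, gives $\inf\mathrm{sp}(n_0(u,\upsilon)|_{\mathfrak{f}^\perp})\geq \beta_{\min}+c$; the inequality $\beta_{\min}+c\geq \breve\beta(u,\upsilon)$ on the right is arranged by choosing $\epsilon_0$ small enough in Lemma \ref{lem.epsilon0}, since then $\breve\beta(u,\upsilon)$ stays within $c$ of $\beta_{\min}$ on the (bounded, by the $Z$-truncation) range of relevant values, while outside that range $\breve\beta$ is large. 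The main obstacle is the first step: carrying out the explicit sequence of gauge/scaling conjugations and verifying cleanly that the only surviving invariants are $\|\mathbf{B}\|$ and $\theta$, \emph{with} the requisite uniformity and smoothness in $(u,\upsilon)$ so that everything stays in the operator-symbol classes used later; the rest is bookkeeping with the established properties of $\mathbf{e}$.
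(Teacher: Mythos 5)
Your high-level strategy is exactly the one in the paper: conjugate $n_0(u,\upsilon)$ by a chain of explicit unitary transforms (Fourier in $z$, gauge transforms, rescalings) down to $\|\mathbf{B}\|\,\mathcal{L}_{\theta}$, then read off simplicity, discreteness, Schwartz decay and the gap from the known spectral theory of the Lu-Pan operator. You also correctly identify, via \eqref{eq.newB} and Remark \ref{rem.B3}, that the only invariants that survive are $\|\mathbf{B}\|$ and $\theta$. So the route is the same, and you acknowledge that the missing piece is carrying out the conjugations.

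Two points where your sketch of that missing piece is off, and where the paper's proof is more careful.

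First, the transform you propose to kill the term $-i\breve E_0\partial_z$ does not work. Conjugation by $e^{i\mu(u,\upsilon)tz}$ (or any gauge transform in the spatial variables) only \emph{shifts} $D_z$ by a function, $D_z\mapsto D_z+\partial_z\phi$; it cannot delete a $D_z$ term from the second component. Likewise a linear change $z\mapsto\lambda z$ rescales $D_z$ but cannot remove it. The correct order, and the one used in the paper, is to \emph{first} take a Fourier transform in $z$, so that $D_z$ becomes a multiplication operator by the dual variable $\zeta$; then the unwanted cross term is a polynomial in $\zeta$, and a gauge transform $e^{-ic\zeta^2/2}$ on the Fourier side removes it. After that the remaining reductions (rescaling, completing the square, a further gauge on the Fourier side) bring the operator to $D_t^2+D_z^2+(t\cos\theta-z\sin\theta)^2$, as in \eqref{eq.LuPan}. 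As written, your conjugations do not produce the reduction you claim, even though the target is right.

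Second, the domain-independence step is not quite where you place it. Reducing to $\mathcal{L}_\theta$ does not give a $\theta$-independent domain: the domain of $\mathcal{L}_\theta$ still depends on $\theta$ through the weight $(t\cos\theta-z\sin\theta)^2$. The paper therefore performs one more unitary conjugation to $\cos^2\theta\,D_t^2+\sin^2\theta\,D_z^2+(t-z)^2$, whose domain \emph{is} $\theta$-independent (for $\theta$ in a compact subinterval of $(0,\tfrac\pi2)$), and it is this form that makes the analytic perturbation argument for smooth dependence of $\mathfrak{f}_{u,\upsilon}$ legitimate. Your added argument for the uniform spectral gap (via continuity of $\mathbf{e}$, compactness of the parameter range after the $Z$-truncation, and shrinking $\epsilon_0$) is sound and fills in a step the paper leaves implicit.
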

\begin{proof}
By using the Fourier transform in $z$ and then a change of gauge, we are reduced to the case when $E_0=0$. With a rescaling in $z$, $n_0$ is unitarily equivalent to
\begin{equation*}
(-i\partial_z- t\breve C_1)^2+\alpha^{-1}(-\breve C_0 z+t\breve E_1)^2-\partial^2_{t}=(-i\partial_{z}- tb_2)^2+(b_3 z+tb_1)^2-\partial^2_{ t}\,,
\end{equation*}
with
\[b_1=\breve{\mathcal{B}}_1\,,\quad b_2=\breve{\alpha^{\frac12}\mathcal{B}}_2\,,\quad b_3=-\breve{\mathcal{B}}_3\,,\]
where the functions are evaluated at $(\upsilon_2,-u_2)$. Recalling \eqref{eq.newB}, we see that the Euclidean norm of $b=(b_1, b_2, b_3)$ is
\[\|b\|_2=\|\breve{\mathbf{B}}\|\,,\]
with a slight abuse of notation. By homogeneity, we can easily scale out $\|\breve{\mathbf{B}}\|$ and consider the operator
\[(-i\partial_z-tb_2)^2-\partial_t^2+(tb_1+b_3z)^2\,,\]
with $b_1=\cos\theta\cos\varphi$, $b_2=\cos\theta\sin\varphi$ and $b_3=\sin\theta$. Completing a square leads to the identity
\begin{multline*}
(-i\partial_z-tb_2)^2-\partial_t^2+(tb_1+b_3z)^2\\
=-\partial_t^2+(t\cos\theta-\sin\varphi D_z-z\sin\theta\cos\varphi)^2+(\cos\varphi D_z-z\sin\theta\sin\varphi)^2\,.
\end{multline*}
This shows, thanks to a change of gauge and a rescaling in $z$, that the operator is unitarily equivalent to
\[D_t^2+(t\cos\theta-\tan\varphi D_z-z\sin\theta )^2+D^2_z\]
and then, by a change of gauge on the Fourier side, to
\[D_t^2+D^2_z+(t\cos\theta-z\sin\theta )^2\,,\]
which is nothing but the Lu-Pan operator defined in \eqref{eq.LuPan}, which is unitarily equivalent to $\cos^2\theta D_t^2+\sin^2\theta D^2_z+(t-z)^2$  (whose domain is independent of $\theta$).

The eigenfunction $\mathfrak{f}_{u,\upsilon}$ belongs to the Schwartz class in virtue of \cite[Corollaire 5.1.2]{Raymond09} and the stability of the Schwartz class by Fourier and gauge transforms.

\end{proof}

\subsection{An approximate parametrix}

\subsubsection{Inverting the principal symbol}

\begin{lemma}\label{lem.Grushin0}
Consider $\epsilon>0$ and $\Lambda\leq \beta_{\min}+\epsilon$. We let
\[\mathscr{P}_0(\Lambda)=\begin{pmatrix}
n_0(u,\upsilon)-\Lambda&\cdot \mathfrak{f}_{u,\upsilon}\\
\langle\cdot,\mathfrak{f}_{u,\upsilon}\rangle&0
	\end{pmatrix}\,.
	\]	
	For $\epsilon$ small enough, $\mathscr{P}_0(\Lambda) : \mathrm{Dom}\, n_0\times\mathbb{C}\to L^2(\mathbb{R}^2_+)\times\mathbb{C}$ is bijective. Its inverse is denoted by $\mathscr{Q}_0$ and is given by
	\[\mathscr{Q}_0=	\mathscr{Q}_0(\Lambda)=\begin{pmatrix}
	(n_0(u,\upsilon)-\Lambda)^{-1}_\perp&\cdot\mathfrak{f}_{u,\upsilon}\\
		\langle\cdot, \mathfrak{f}_{u,\upsilon}\rangle&\Lambda-\breve\beta(\upsilon,u)
	\end{pmatrix}\,,\]
where $(n_0(u,\upsilon)-\Lambda)^{-1}_\perp$ is the regularized resolvent on $(\mathrm{span}\,\mathfrak{f}_{u,\upsilon})^\perp$.

Moreover, we have $\mathscr{Q}_0\in S(\mathbb{R}^2,0)$.
\end{lemma}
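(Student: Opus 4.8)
The plan is to verify bijectivity of $\mathscr{P}_0(\Lambda)$ pointwise in $(u,\upsilon)$ by exhibiting the explicit candidate inverse $\mathscr{Q}_0(\Lambda)$ and checking $\mathscr{P}_0\mathscr{Q}_0=\mathrm{Id}$ and $\mathscr{Q}_0\mathscr{P}_0=\mathrm{Id}$ by a direct block computation, and then to upgrade this to membership in the symbol class $S(\mathbb{R}^2,0)$ by controlling all $(u,\upsilon)$-derivatives uniformly. First I would fix $(u,\upsilon)$ and recall from Proposition \ref{prop.psymb} that $\breve\beta(\upsilon,-u)$ is a simple isolated eigenvalue of $n_0(u,\upsilon)$ with normalized eigenfunction $\mathfrak{f}_{u,\upsilon}$, and that the rest of the spectrum lies above $\beta_{\min}+c$. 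Hence, choosing $\epsilon<c$, the number $\Lambda$ is not in the spectrum of the restriction $n_0|_{\mathfrak f_{u,\upsilon}^\perp}$, so the regularized resolvent $(n_0(u,\upsilon)-\Lambda)^{-1}_\perp$ — defined as the inverse of $n_0-\Lambda$ on the invariant subspace $(\mathrm{span}\,\mathfrak f_{u,\upsilon})^\perp$, extended by $0$ on $\mathrm{span}\,\mathfrak f_{u,\upsilon}$ — is a well-defined bounded operator with norm $\leq (c-\epsilon)^{-1}$. The block identities are then routine: writing $\Pi=\langle\cdot,\mathfrak f_{u,\upsilon}\rangle\mathfrak f_{u,\upsilon}$ for the spectral projection, one uses $(n_0-\Lambda)(n_0-\Lambda)^{-1}_\perp=\mathrm{Id}-\Pi$, $(n_0-\Lambda)\mathfrak f_{u,\upsilon}=(\breve\beta(\upsilon,-u)-\Lambda)\mathfrak f_{u,\upsilon}$, and $(n_0-\Lambda)^{-1}_\perp\mathfrak f_{u,\upsilon}=0$ to multiply the two $2\times2$ operator matrices and obtain the identity in each of the four blocks.

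Next I would address the symbol estimates, i.e.\ that $\mathscr{Q}_0\in S(\mathbb{R}^2,0)$: one needs, for every multi-index $\gamma\in\mathbb N^2$, a uniform bound on $\|\partial^\gamma_{(u,\upsilon)}\mathscr{Q}_0(\Lambda)\|$ in the relevant operator norm. The off-diagonal and corner entries involve only $\mathfrak f_{u,\upsilon}$ and $\breve\beta(\upsilon,-u)$, which are smooth with all derivatives bounded — the eigenfunction because it lies in the Schwartz class and depends smoothly on $(u,\upsilon)$ (Proposition \ref{prop.psymb}), together with the fact, guaranteed by Lemma \ref{lem.epsilon0} and the extension procedure of Section \ref{sec.ext}, that all the coefficient functions $\breve C_j,\breve E_j,\alpha$ are smooth and bounded with bounded derivatives. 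For the main entry $(n_0-\Lambda)^{-1}_\perp$ I would differentiate the resolvent identity: schematically $\partial(n_0-\Lambda)^{-1}_\perp$ produces terms of the form $(n_0-\Lambda)^{-1}_\perp(\partial n_0)(n_0-\Lambda)^{-1}_\perp$ plus contributions coming from the $(u,\upsilon)$-dependence of the projection $\Pi$ (since the subspace on which we invert moves with $(u,\upsilon)$). Since $n_0\in S(\mathbb R^2,2)$, each $\partial^\gamma n_0$ maps $\mathscr B_k\to\mathscr B_{k-2}$ boundedly, and $(n_0-\Lambda)^{-1}_\perp$ gains regularity in the scale $\mathscr B_k$ (by elliptic regularity for the harmonic-oscillator-type operator $n_0$, uniformly in $(u,\upsilon)$ because the extended coefficients are uniformly bounded and $n_0$ is uniformly elliptic away from $\mathfrak f_{u,\upsilon}$); combining these by Leibniz gives the claimed uniform bounds, so $\mathscr Q_0\in S(\mathbb R^2,0)=\bigcap_{k\geq 0}S(\mathbb R^2,\mathcal L(\mathscr B_k,\mathscr B_k))$, in fact in the Neumann-adapted class since $(n_0-\Lambda)^{-1}_\perp$ and $\cdot\,\mathfrak f_{u,\upsilon}$ land in $\mathscr B^{\mathrm{Neu}}$.

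The main obstacle I anticipate is not the algebraic block computation, which is immediate, but the \emph{uniformity in $(u,\upsilon)$} of the resolvent and eigenfunction estimates underlying $\mathscr Q_0\in S(\mathbb R^2,0)$: one must know that the spectral gap above $\breve\beta(\upsilon,-u)$ stays bounded below uniformly (this is exactly the content of the last inequality in Proposition \ref{prop.psymb}, obtained by shrinking $\epsilon_0$), and that the Schwartz-seminorm bounds on $\mathfrak f_{u,\upsilon}$ and the $\mathscr B_k\to\mathscr B_{k-2}$ operator norms of $\partial^\gamma n_0$ are uniform — which relies on the fact that the extension of Section \ref{sec.ext} has frozen all coefficients outside a compact set, so that $n_0(u,\upsilon)$ is, up to the unitary reductions in the proof of Proposition \ref{prop.psymb}, a compact perturbation in $(u,\upsilon)$ of a fixed Lu-Pan operator. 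Granting these uniform inputs, the differentiated resolvent identity closes the argument. I would also remark that the symbol-class statement is stated for the canonical $L^2(\mathbb R^2_+)$-inner product used to define $\mathscr P_0$, and that no measure-weight subtlety enters here since the weighted-space issues of Section \ref{sec.5} concern $\mathscr N_h$ itself, not the frozen principal symbol $n_0$.
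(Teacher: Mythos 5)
Your proposal is correct and follows essentially the same route as the paper: verify the Grushin inverse by the block computation using the spectral gap from Proposition \ref{prop.psymb} (taking $\epsilon<c$), then obtain the $S(\mathbb{R}^2,0)$ membership from uniform weighted resolvent estimates, which the paper simply outsources to \cite[p.100-101]{Raymond09} and \cite[Appendix]{FLTRVN22}. Your explicit flagging of the $(u,\upsilon)$-dependence of the spectral projection $\Pi$ when differentiating $(n_0-\Lambda)^{-1}_\perp$, and of the uniformity guaranteed by the extension procedure of Section \ref{sec.ext}, fills in exactly the details the paper leaves implicit.
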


\begin{proof}
	By using the same algebraic computations as in \cite{Keraval} and the spectral gap in Proposition \ref{prop.psymb}, we get the announced inverse. Moreover, it is also clear that $\mathscr{Q}_0$ is bounded from $L^2(\R^2_+)$ to $L^2(\R^2_+)$ uniformly in $(u,\upsilon)$. The fact that it belongs to the class $S(\R^2,0)$ follows from weighted resolvent estimates similar to \cite[p.100-101]{Raymond09}, see also \cite[Appendix]{FLTRVN22}.
	\end{proof} 

We let
\[\mathscr{P}_\hbar(\Lambda)=\begin{pmatrix}
	n_0+\hbar n_1+\hbar^2n_2+\hbar^3r_\hbar-\Lambda&	\cdot \mathfrak{f}_{u,\upsilon}\\
\langle\cdot,\mathfrak{f}_{u,\upsilon}\rangle&0
\end{pmatrix}=\mathscr{P}_0(\Lambda)+\hbar\mathscr{P}_1+\hbar^2\mathscr{P}_2+\hbar^3\mathscr{R}_\hbar\,,
\]	
where $n_0$, $n_1$, $n_2$, and $r_\hbar$ are given in Proposition \ref{prop.Nhbarbemol}.
\subsubsection{The approximate parametrix}
Let us now construct an approximate (at the order $2$) inverse of $\mathscr{P}_\hbar^W$ when it acts on the Schwartz class (with Neumann condition). We consider
\[\mathscr{Q}_\hbar=\mathscr{Q}_0+\hbar\mathscr{Q}_1+\hbar^2\mathscr{Q}_2=\begin{pmatrix}
Q_\hbar&Q_\hbar^+\\
Q_\hbar^-&Q_{\hbar}^\pm
\end{pmatrix}\,,\]
where
\begin{equation}\label{eq.Q1Q2}
\mathscr{Q}_1=-\mathscr{Q}_0\mathscr{P}_1\mathscr{Q}_0\,,\quad \mathscr{Q}_2=-\mathscr{Q}_0\mathscr{P}_2\mathscr{Q}_0+\mathscr{Q}_0\mathscr{P}_1\mathscr{Q}_0\mathscr{P}_1\mathscr{Q}_0-\frac1i\{\mathscr{Q}_0,\mathscr{P}_0\}\mathscr{Q}_0\,.
\end{equation}
By Remark \ref{rem.SMN}, the symbols $\mathscr{Q}_1$ and $\mathscr{Q}_2$ belong to $S(\mathbb{R}^2,M)$, for some $M\geq 8$. By computing products of matrices and using the exponential decay of $\mathfrak{f}_{u,\upsilon}$, we get
\begin{equation}\label{eq.Qpm}
Q_\hbar^\pm(\Lambda)=\Lambda-(p_0+\hbar p_1+\hbar^2p_{2,\Lambda})\,,
\end{equation}
with $p_0=\breve\beta(\upsilon,-u)$ and $p_1, p_{2,\Lambda}\in S_{\mathbb{R}^2}(1)$ where
\[S_{\mathbb{R}^2}(1)=\{a\in\mathscr{C}^\infty(\mathbb{R}^2,\mathbb{C}) : \forall\alpha\in\mathbb{N}^2\,, \exists C_\alpha>0 : |\partial^\alpha a|\leq C_\alpha\}\,.\]
In addition, $\Lambda\mapsto p_{2,\Lambda}\in S_{\mathbb{R}^2}(1)$ is analytic in a neighborhood of $\beta_{\min}$.
\begin{remark}
Let us emphasize here that nothing a priori ensures that the subprincipal symbols $p_1$ and $p_{2,E}$ are real-valued since our formal operator is not selfadjoint on the canonical $L^2$-space.	
\end{remark}
The reason to consider the expressions \eqref{eq.Q1Q2} simply comes from the semiclassical expansion of the product $\mathscr{Q}_\hbar^W\mathscr{P}_\hbar^W$ by means of the composition theorem \cite[Theorem 2.1.12]{Keraval}. These explicit choices, with the Calder\'on-Vaillancourt Theorem \cite[Theorem 2.1.16]{Keraval} to estimate the remainders, imply the following proposition.

\begin{proposition}\label{prop.parametrix}
There exists $N\geq 2$ such that the following holds. We have
\[\mathscr{Q}_\hbar^W\mathscr{P}_\hbar^W=\mathrm{Id}_{\mathscr{S}^\mathrm{Neu}(\overline{\mathbb{R}}^2_+)\times\mathscr{S}(\R)}+\hbar^3\mathscr{R}^W_{\hbar,\ell}\,,\quad \mathscr{P}_\hbar^W\mathscr{Q}_\hbar^W=\mathrm{Id}_{\mathscr{S}(\overline{\mathbb{R}}^2_+)\times\mathscr{S}(\R)}+\hbar^3\mathscr{R}^W_{\hbar,r}\,,\]
where $\mathscr{R}_{\hbar,\ell}$ and $\mathscr{R}_{\hbar,r}$ belong to $S(\mathbb{R}^2,N)$ and where $\mathscr{S}^\mathrm{Neu}(\overline{\mathbb{R}}^2_+)$ denotes the Schwartz class on $\mathbb{R}^2_+$ with Neumann condition at $t=0$.

In particular, we have, for all $\psi\in\mathscr{S}^\mathrm{Neu}(\overline{\mathbb{R}}^2_+)$,
\begin{equation}\label{eq.Grul}
	\begin{split}
	Q^W_\hbar(\mathscr{N}^\flat_\hbar-\Lambda)\psi+(Q_\hbar^+)^W\mathfrak{P}\psi&=\psi+\mathscr{O}(\hbar^3)\|\psi\|_{ L^2(\mathbb{R},\mathscr{B}_N)}\,,\\
	(Q^-_\hbar)^W(\mathscr{N}^\flat_\hbar-\Lambda)\psi+(Q_\hbar^\pm)^W\mathfrak{P}\psi	&=\mathscr{O}(\hbar^3)\|\psi\|_{ L^2(\mathbb{R},\mathscr{B}_N)}\,,
\end{split}\end{equation}
and, for all $\varphi\in\mathscr{S}(\mathbb{R})$,
\begin{equation}\label{eq.Grur}
	\begin{split}
	(\mathscr{N}^\flat_\hbar-\Lambda)(Q_\hbar^+)^W\varphi+\mathfrak{P}^*(Q_\hbar^\pm)^W\varphi&=\mathscr{O}(\hbar^3)\|\varphi\|\,,\\
\mathfrak{P}(Q_\hbar^+)^W\varphi	&=\varphi+\mathscr{O}(\hbar^3)\|\varphi\|\,.
\end{split}
\end{equation}
Here, $\mathfrak{P}=(\langle\cdot,\mathfrak{f}_{u,\upsilon}\rangle)^W$.
\end{proposition}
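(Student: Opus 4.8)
The plan is to derive the two composition identities from the symbolic calculus for operator-valued pseudodifferential operators in the classes $S(\mathbb{R}^2,N)$ (composition theorem \cite[Theorem 2.1.12]{Keraval} and Calder\'on--Vaillancourt theorem \cite[Theorem 2.1.16]{Keraval}), and then to read off \eqref{eq.Grul}--\eqref{eq.Grur} as the matrix entries of $\mathscr{Q}_\hbar^W\mathscr{P}_\hbar^W$, resp.\ $\mathscr{P}_\hbar^W\mathscr{Q}_\hbar^W$, evaluated at $(\psi,0)$, resp.\ $(0,\varphi)$. First I would record the relevant symbol classes. By Proposition \ref{prop.Nhbarbemol} one has $\mathscr{P}_\hbar=\mathscr{P}_0+\hbar\mathscr{P}_1+\hbar^2\mathscr{P}_2+\hbar^3\mathscr{R}_\hbar$, where the diagonal block of $\mathscr{P}_0$ is $n_0-\Lambda\in S^{\mathrm{Neu}}(\mathbb{R}^2,2)$, its off-diagonal entries $\cdot\,\mathfrak{f}_{u,\upsilon}$ and $\langle\cdot,\mathfrak{f}_{u,\upsilon}\rangle$ are built from the Schwartz functions $\mathfrak{f}_{u,\upsilon}$ depending smoothly and boundedly on $(u,\upsilon)$ (Proposition \ref{prop.psymb}), and $\mathscr{P}_1,\mathscr{P}_2,\mathscr{R}_\hbar\in S(\mathbb{R}^2,8)$ uniformly in $\hbar$. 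By Lemma \ref{lem.Grushin0}, $\mathscr{Q}_0=\mathscr{P}_0^{-1}\in S(\mathbb{R}^2,0)$; by the formulas \eqref{eq.Q1Q2} and Remark \ref{rem.SMN} (the Moyal product sends $S(\mathbb{R}^2,N)\times S(\mathbb{R}^2,M)$ into $S(\mathbb{R}^2,N+M)$), the correctors $\mathscr{Q}_1,\mathscr{Q}_2$ lie in some $S(\mathbb{R}^2,M_0)$ with $M_0\geq 8$; and all of $\mathscr{Q}_0,\mathscr{Q}_1,\mathscr{Q}_2$ preserve the Neumann condition at $t=0$, being built from the regularized resolvent of $n_0$ and from $\mathfrak{f}_{u,\upsilon}$.

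Next I would apply the composition theorem to write $\mathscr{Q}_\hbar^W\mathscr{P}_\hbar^W=(\mathscr{Q}_\hbar\#\mathscr{P}_\hbar)^W$, expand the Moyal product in powers of $\hbar$, and collect terms. The $\hbar^0$-term is $\mathscr{Q}_0\mathscr{P}_0=\mathrm{Id}$, which is exactly Lemma \ref{lem.Grushin0}. The correctors \eqref{eq.Q1Q2} are, by design (in the spirit of \cite{Keraval}), precisely the ones making the $\hbar^1$- and $\hbar^2$-contributions of $\mathscr{Q}_\hbar\#\mathscr{P}_\hbar$ vanish identically, the three summands of $\mathscr{Q}_2$ accounting respectively for $\mathscr{P}_2$, for the iterated effect of $\mathscr{P}_1$, and for the Moyal bracket $\{\mathscr{Q}_0,\mathscr{P}_0\}$; checking this is a direct, if tedious, computation that uses only the composition theorem and the identity $\mathscr{Q}_0\mathscr{P}_0=\mathscr{P}_0\mathscr{Q}_0=\mathrm{Id}$. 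What remains is $\hbar^3\mathscr{R}_{\hbar,\ell}$, a finite sum of Moyal products of the symbols above and a bounded number of their derivatives; each summand lies in some $S(\mathbb{R}^2,M_j)$, so $\mathscr{R}_{\hbar,\ell}\in S(\mathbb{R}^2,N)$ with $N=\max_j M_j$, uniformly for $\hbar\in(0,\hbar_0)$ --- the restrictive class $S^{\mathrm{Neu}}(\mathbb{R}^2,8)$ of Proposition \ref{prop.Nhbarbemol} being exactly what yields this uniformity despite the $\hbar$-dependent cutoffs $\chi(h^\eta t)$. The right identity $\mathscr{P}_\hbar^W\mathscr{Q}_\hbar^W=\mathrm{Id}+\hbar^3\mathscr{R}_{\hbar,r}^W$ is obtained symmetrically.

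To conclude, the Calder\'on--Vaillancourt theorem for operator-valued symbols turns the membership $\mathscr{R}_{\hbar,\ell},\mathscr{R}_{\hbar,r}\in S(\mathbb{R}^2,N)$ into the stated uniform operator bounds (in particular $\mathscr{R}_{\hbar,\ell}^W$ maps $L^2(\mathbb{R},\mathscr{B}_N)$ boundedly into $L^2(\mathbb{R}^3_+)\times L^2(\mathbb{R})$, uniformly in $\hbar$), and the four relations \eqref{eq.Grul}--\eqref{eq.Grur} are simply the components of $\mathscr{Q}_\hbar^W\mathscr{P}_\hbar^W(\psi,0)=(\psi,0)+\hbar^3\mathscr{R}_{\hbar,\ell}^W(\psi,0)$ and of $\mathscr{P}_\hbar^W\mathscr{Q}_\hbar^W(0,\varphi)=(0,\varphi)+\hbar^3\mathscr{R}_{\hbar,r}^W(0,\varphi)$, using that the $(1,2)$, $(2,1)$ and $(2,2)$ blocks of $\mathscr{P}_\hbar$ are $\mathfrak{P}^*=(\cdot\,\mathfrak{f}_{u,\upsilon})^W$, $\mathfrak{P}=(\langle\cdot,\mathfrak{f}_{u,\upsilon}\rangle)^W$ and $0$, and the matching blocks of $\mathscr{Q}_\hbar^W$ are $(Q_\hbar^+)^W$, $(Q_\hbar^-)^W$ and $(Q_\hbar^\pm)^W$. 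I expect the main obstacle to be organizational rather than analytic: one must unwind the composition theorem carefully enough to see the $\hbar^1$- and $\hbar^2$-cancellations, and --- the genuinely delicate point --- keep track of the order $N$ along the compositions, since the classes $S(\mathbb{R}^2,N)$ are not algebras (Remark \ref{rem.SMN}), checking that every intermediate product still acts between the correct weighted spaces $\mathscr{B}_k$ and preserves the Neumann condition after composing with the unbounded symbol $n_0$; and one must keep all constants uniform in $\hbar$, which is the reason for working with $S^{\mathrm{Neu}}(\mathbb{R}^2,8)$ rather than $S^{\mathrm{Neu}}(\mathbb{R}^2,2)$. Nothing beyond the composition theorem, Calder\'on--Vaillancourt, and the spectral gap of Proposition \ref{prop.psymb} (already built into Lemma \ref{lem.Grushin0}) is needed.
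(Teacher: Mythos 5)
Your proposal matches the paper's argument: the paper itself gives only a one-paragraph justification, stating that the choices \eqref{eq.Q1Q2} are dictated by the semiclassical expansion of $\mathscr{Q}_\hbar^W\mathscr{P}_\hbar^W$ via the composition theorem of \cite{Keraval}, with the Calder\'on--Vaillancourt theorem used to bound the remainders, and you reconstruct exactly this plan (together with correctly reading \eqref{eq.Grul}--\eqref{eq.Grur} off the matrix blocks). Your added remarks on tracking the non-algebra structure of $S(\mathbb{R}^2,N)$, the Neumann condition, and the uniformity in $\hbar$ via $S^{\mathrm{Neu}}(\mathbb{R}^2,8)$ are consistent with the paper's intent.
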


\section{Spectral consequences}\label{sec.5}
This last section is devoted to the proof of Theorem \ref{thm.main} with the help of Proposition \ref{prop.parametrix}. The spectrum of $\mathscr{N}^\sharp_\hbar$ will be compared to the spectrum of a model operator, derived from an effective operator whose symbol is
\begin{equation}\label{eq.peff}
p_h^{\mathrm{eff}}=p_0+\hbar p_1+\hbar^2p_{2,\beta_{\min}}\,,
\end{equation}
see \eqref{eq.Qpm}.
\subsection{A model operator}\label{sec.model}
Let us consider
\[p_h^{\mathrm{mod}}(U)=p^{\mathrm{eff}}_h(0)+\frac12\mathrm{Hess}_{(0,0)}\, p_0 (U,U)+\hbar p^{\mathrm{lin}}_1(U)\,,\quad U=(u,\upsilon)\,,\]
where $p^{\mathrm{lin}}_1$ is the linear approximation of $p_1$ at $(0,0)$. The corresponding operator $(p_h^{\mathrm{mod}})^W$ is not selfadjoint due to the linear part. However, this operator has still compact resolvent and we can compute its spectrum and estimate its resolvent. Let us explain this. Thanks to a rotation and Assumption \ref{hyp.main}, we may assume that
\[p_h^{\mathrm{mod}}=p^{\mathrm{eff}}_h(0)+\frac{d_0}{2}(u^2+\upsilon^2)+\hbar(\alpha u+\beta\upsilon)\,,\]
for some $d_0>0$ and $(\alpha,\beta)\in\mathbb{C}^2$. 

\begin{remark}\label{rem.d0}
In fact, we have
\[d_0=\sqrt{\det\mathrm{Hess}_{(0,0)} p_0}=\sqrt{\det\mathrm{Hess}_{(0,0)} \breve\beta(\upsilon,-u)}=\sqrt{\frac{\det\mathrm{Hess}_{x_0} \beta}{{\|\mathbf{B}(x_0)\|^2\sin^2\theta(x_0)}}}\,,\]
where we used the notation introduced at the beginning of Section \ref{sec.3}, the change of variable $\mathscr{J}$ in Lemma \ref{lem.J}, and Remark \ref{rem.B3}.
\end{remark}

By completing  square, we get
\[(p_h^{\mathrm{mod}})^W=\tilde p^{\mathrm{eff}}_h(0)+\frac{d_0}{2}\left(\left(u+\frac{\hbar\alpha}{d_0}\right)^2+\left(hD_u+\frac{\hbar\beta}{d_0}\right)^2\right)\,\,, \quad \tilde p^{\mathrm{eff}}_h(0)=p^{\mathrm{eff}}_h(0)-\frac{\alpha^2+\beta^2}{d_0}h\,.\]
For all $n\geq 1$, we let
\[f_{n}(u)=[e^{-i\beta \cdot/d_0}H_n(\cdot)]\left(u+\frac{\alpha}{d_0}\right)\,,\quad  f_{n,\hbar}(u)=\hbar^{-\frac12}f_n(\hbar^{-1}u)\,,\]
where $H_n$ is the $n$-th normalized Hermite function. 

The family $(f_{n,\hbar})_{n\geq 1}$ is a total family in $L^2(\mathbb{R})$ (but not necessarily orthogonal). It satisfies
\begin{equation}\label{eq.lambdanmod}
(p_h^{\mathrm{mod}})^Wf_{n,\hbar}=\lambda_n^{\mathrm{mod}}(h)f_{n,\hbar}\,,\quad  \lambda_n^{\mathrm{mod}}(h)=\frac{d_0}{2}(2n-1)h+\tilde p^{\mathrm{eff}}_h(0)\,.
\end{equation}
By the analytic perturbation theory, the spectrum of $(p_h^{\mathrm{mod}})^W$ is made of eigenvalues of algebraic multiplicity $1$ and it is given by
\[\mathrm{sp}\left((p_h^{\mathrm{mod}})^W\right)=\left\{\frac{d_0}{2}(2n-1)h+\tilde p^{\mathrm{eff}}_h(0)\,,\quad n\geq 1\right\}\,.\]
Moreover, for all compact $K\subset\mathbb{C}$, there exists $C_K>0$ such that, for all $\mu\in K$,
\begin{equation}\label{eq.resolvent}
\|((p_h^{\mathrm{mod}})^W-\tilde p^{\mathrm{eff}}_h(0)-h\mu)^{-1}\|\leq \frac{C_K}{\mathrm{dist}(\tilde p^{\mathrm{eff}}_h(0)+h\mu,\mathrm{sp}\left((p_h^{\mathrm{mod}})^W\right))}\,.
\end{equation}

\subsection{Refined estimates}

\subsubsection{From the model operator to $\mathscr{N}_\hbar^\sharp$}\label{sec.quasimodes}
The functions $(f_{n,\hbar})$ can serve as quasimodes for $\mathscr{N_\hbar}^\sharp$ with the help of \eqref{eq.Grur}. Indeed, by taking $z=\lambda_n^{\mathrm{mod}}(h)$ and $\varphi=f_{n,\hbar}$, we see that
\[	(\mathscr{N}^\flat_\hbar-\lambda_n^{\mathrm{mod}}(h))(Q_\hbar^+)^Wf_{n,\hbar}=\mathscr{O}(\hbar^3)\,,\]
Since $(Q_\hbar^+)^Wf_{n,\hbar}$ is  localized near $(z,t)=(0,0)$ (due to the exponential decay of $\mathfrak{f}_{u,\upsilon}$, which is uniform in $(u,\upsilon)$), we get
\[	(\mathscr{N}^\sharp_\hbar-\lambda_n^{\mathrm{mod}}(h))(Q_\hbar^+)^Wf_{n,\hbar}=\mathscr{O}(\hbar^3)\,.\]
By using the inverse Fourier transform and translation/dilation, $(Q_\hbar^+)^Wf_{n,\hbar}$ becomes a quasimode for $\mathscr{N}_h$, see \eqref{eq.LapptoNh} and the end of Section \ref{sec.3}. But the operator $\mathscr{N}_h$ is unitarily equivalent to selfadjoint for a suitable scalar product on the usual $L^2$-space. Therefore, we can apply the spectral theorem and we deduce that
\[\mathrm{dist}\left(\lambda_n^{\mathrm{mod}}(h),\mathrm{sp}(\mathscr{N}^\sharp_\hbar)\right)\leq C\hbar^3\,.\]
In particular, this implies that, for $h$ small enough, $\lambda_n^{\mathrm{mod}}(h)$ is real. This shows that we necessarily have
\[p_1(0)\in\mathbb{R}\,,\quad p_2(0)-\frac{\alpha^2+\beta^2}{d_0}\in\mathbb{R}\,.\]
This also implies that
\begin{equation}\label{eq.upperbound}
\lambda_n(\mathscr{N}_\hbar^\sharp)\leq \lambda_n^{\mathrm{mod}}(h)+C\hbar^3\,.
\end{equation}

\subsubsection{From $\mathscr{N}_\hbar^\sharp$ to the model operator}\label{sec.lowerbound}
Let $n\geq 1$. Let us consider an eigenfunction $\psi$ of $\mathscr{N}_\hbar^\sharp$ associated with the eigenvalue $\lambda_n(\mathscr{N}_\hbar^\sharp)$. 

We know that $\lambda_n(\mathscr{N}_\hbar^\sharp)=\beta_{\min}+o(1)$ and that the corresponding eigenfunctions are localized in $(z,t)$ (due to the Agmon estimates and Proposition \ref{prop.variable-effective2}). Thus, in \eqref{eq.Grul}, we can replace $\mathscr{N}_\hbar^\flat$ by  $\mathscr{N}_\hbar^\sharp$ and we deduce that
\begin{equation}\label{eq.quasimodepeff0}
\left((p_h^{\mathrm{eff}})^W-\lambda_n(\mathscr{N}_\hbar^\sharp)\right)\mathfrak{P}\psi=\mathscr{O}(\hbar^3)\|\psi\|\,,\quad \|\psi\|\leq C\|\mathfrak{P}\psi\|\,.
\end{equation}
where we used Remark \ref{rem.roughfrequency} to control the remainders.
By taking the scalar product with $\mathfrak{P}\psi$, taking the real part and using the min-max principle, we get that
\[\lambda_n(\mathscr{N}_\hbar^\sharp)\geq \beta_{\min}+p_1(0)\hbar-Ch\,.\]
This establishes the two-term asymptotic estimate
\[\lambda_n(\mathscr{N}_\hbar^\sharp)=\beta_{\min}+p_1(0)\hbar+\mathscr{O}(h)\,.\]
Therefore, we can focus on the description of the eigenvalues of the form
\[\lambda_n(\mathscr{N}_\hbar^\sharp)=\beta_{\min}+p_1(0)\hbar+\mu_n(\hbar) h\,,\]
for $\mu_n(\hbar)\in D(0,R)$ with a given $R>0$. We have
\begin{equation}\label{eq.quasimodepeff}
\left((p_h^{\mathrm{eff}})^W-(\beta_{\min}+p_1(0)\hbar+\mu_n(\hbar) h))\right)\mathfrak{P}\psi_n=\mathscr{O}(\hbar^3)\|\mathfrak{P}\psi_n\|\,,
\end{equation}
where $\psi_n$ denotes a normalized eigenfunction associated to the $n$-th eigenvalue of $\mathscr{N}_\hbar^\sharp$. In fact, by considering \eqref{eq.quasimodepeff} and again Proposition \ref{prop.parametrix}, the function $\mathfrak{P}\psi_n$ is microlocalized near $(0,0)$, the minimum of the principal symbol $p_0$. Since this minimum is non-degenerate, the quadratic approximation of the symbol shows that $\mathfrak{P}\psi_n$ is microlocalized near $(u,\upsilon)=(0,0)$ at the scale $\hbar^{1-\eta}$ for any $\eta\in\left(0,\frac12\right)$. In particular, we deduce that
\[\left((p_h^{\mathrm{mod}})^W-(\beta_{\min}+p_1(0)\hbar+\mu_n(\hbar) h))\right)\mathfrak{P}\psi_n=\mathscr{O}(\hbar^{3-3\eta})\|\mathfrak{P}\psi_n\|\,.\]
From the resolvent estimate \eqref{eq.resolvent}, this implies that
\[\mu_n(\hbar)\in \bigcup_{j\geq 1} D\left(\frac{d_0}{2}(2j-1)+d_1, C\hbar^{1-3\eta}\right)\,,\quad d_1=p_2(0)-\frac{\alpha^2+\beta^2}{d_0}\,,\]
where $D(z,r)$ denotes the disc of center $z\in\C$ and radius $r>0$.
In particular, we have
\[\mu_1(\hbar)\geq \frac{d_0}{2}+d_1-C\hbar^{1-3\eta}\,.\]
This shows that
\[\lambda_1\left(\mathscr{N}_\hbar^\sharp\right)\geq \beta_{\min}+p_1(0)\hbar+\left(\frac{d_0}{2}+d_1\right)\hbar^2-C\hbar^{3-3\eta}\,,\]
and thus, with \eqref{eq.upperbound}, we get
\[\mu_1(\hbar)= \frac{d_0}{2}+d_1+\mathscr{O}(\hbar^{1-3\eta})\,,\]
and
\[\lambda_1\left(\mathscr{N}_\hbar^\sharp\right)= \lambda_1^{\mathrm{mod}}(h)+\mathscr{O}(\hbar^{3-3\eta})\,.\]
Let us now deal with $\lambda_2\left(\mathscr{N}_\hbar^\sharp\right)$ and recall \eqref{eq.upperbound}. Assume by contradiction that $\mu_2(\hbar)\in D\left(\frac{d_0}{2}+d_1, C\hbar^{1-3\eta}\right)$. Then, we have
\[|\mu_2(\hbar)-\mu_1(\hbar)|\leq C\hbar^{1-3\eta}\,.\]
We infer that
\[\left((p_h^{\mathrm{mod}})^W-\lambda_1^{\mathrm{mod}}(h)\right)\mathfrak{P}\psi=\mathscr{O}(\hbar^{3-3\eta})\|\mathfrak{P}\psi\|\,,\]
for all $\psi\in\mathrm{span}(\psi_1,\psi_2)$. Moreover, coming back to \eqref{eq.Grul} (see also \eqref{eq.quasimodepeff}), we also get that $\|\psi\|\leq C\|\mathfrak{P}\psi\|$ for all $\psi\in\mathrm{span}(\psi_1,\psi_2)$. In particular, $\mathfrak{P}(\mathrm{span}(\psi_1,\psi_2))$ is of dimension two. Let us consider the Riesz projector (in the characteristic subspace of $(p_\hbar^{\mathrm{mod}})^W$ associated with the smallest eigenvalue)
\[\Pi=\frac{1}{2i\pi}\int_{\lambda_1^{\mathrm{mod}}(h),\hbar^{3-4\eta})}(\zeta-(p_\hbar^{\mathrm{mod}})^W)^{-1}\mathrm{d}\zeta\,,\]
which is of rank one. Then, for all $\varphi\in\mathfrak{P}(\mathrm{span}(\psi_1,\psi_2))$, we write, with the Cauchy formula,
\begin{equation*}
\Pi\varphi=\varphi
+\frac{1}{2i\pi}\int_{\lambda_1^{\mathrm{mod}}(h),\hbar^{3-4\eta})}\left((\zeta-(p_\hbar^{\mathrm{mod}})^W)^{-1}-(\zeta-\lambda_1^{\mathrm{mod}}(h))^{-1}\right)\mathrm{d}\zeta\,.
\end{equation*}
But, we have
\[(\zeta-(p_\hbar^{\mathrm{mod}})^W)^{-1}-(\zeta-\lambda_1^{\mathrm{mod}}(h))^{-1}=(\zeta-\lambda_1^{\mathrm{mod}}(h))^{-1}(\zeta-(p_\hbar^{\mathrm{mod}})^W)^{-1}\left((p_h^{\mathrm{mod}})^W-\lambda_1^{\mathrm{mod}}(h))\right)\,,\]
so that, by using the resolvent estimate \eqref{eq.resolvent}, we get
\[\|\Pi\varphi-\varphi\|\leq C\hbar^{3-4\eta}\hbar^{-3+4\eta}\hbar^{-3+4\eta}\hbar^{3-3\eta}\|\varphi\|=C\hbar^{\eta}\|\varphi\|\,.\]
This shows that the range of $\Pi$ is of dimension at least two as soon as $\hbar$ is small enough. This is a contradiction. Therefore, we must have $\mu_2(\hbar)\in D\left(3\frac{d_0}{2}+d_1, C\hbar^{1-3\eta}\right)$. In particular, we have
\[\mu_2(\hbar)= 3\frac{d_0}{2}+d_1+\mathscr{O}(\hbar^{1-3\eta})\,,\quad \lambda_2\left(\mathscr{N}_\hbar^\sharp\right)= \lambda_2^{\mathrm{mod}}(h)+\mathscr{O}(\hbar^{3-3\eta})\,. \]
We proceed by induction to get that, for all $n\geq 1$,
\begin{equation}\label{eq.lambdandiese}
\mu_n(\hbar)= (2n-1)\frac{d_0}{2}+d_1+\mathscr{O}(\hbar^{1-3\eta})\,,\quad \lambda_n\left(\mathscr{N}_\hbar^\sharp\right)= \lambda_n^{\mathrm{mod}}(h)+\mathscr{O}(\hbar^{3-3\eta})\,. 
\end{equation}

\subsubsection{End of the proof of Theorem \ref{thm.main}}
Proposition \ref{prop.Lapp} shows that  the first eigenvalues of $\mathscr{L}_h$ coincide with those of $\mathscr{L}^{\mathrm{app}}_h$ modulo $o(h^2)$. Then, by \eqref{eq.LapptoNh}, $\mathscr{L}^{\mathrm{app}}_h$ is unitarily equivalent to $\mathscr{N}_h$. The operator $\mathscr{N}_h$ is unitarily equivalent to $h\mathscr{N}_\hbar^\sharp$, see \eqref{eq.Ndiese}. Theorem \ref{thm.main} follows from \eqref{eq.lambdandiese} and \eqref{eq.lambdanmod} (see also Remark \ref{rem.d0} for the explicit formula for $d_0$).

\section*{Acknowledgments}
This work was conducted within the France 2030 framework programme, Centre Henri Lebesgue ANR-11-LABX-0020-01.

\bibliographystyle{abbrv}
\bibliography{bibaaahr}

	\end{document}